\numberwithin{equation}{section}
\newtheorem{Theorem}{Theorem}[section]
\newtheorem*{Theorem*}{Theorem}
\newtheorem*{Corollary*}{Corollary}
\newtheorem{Lemma}[Theorem]{Lemma}
\newtheorem{Proposition}[Theorem]{Proposition}
\newtheorem{Corollary}[Theorem]{Corollary}
\theoremstyle{definition}
\theoremstyle{remark}
\newtheorem{Remark}[Theorem]{Remark}
\newtheorem*{Remark*}{Remark}
\newcommand{\Z}{\mathbb{Z}}
\renewcommand{\k}{\mathbbm{k}}
\renewcommand{\Z}{\mathbb{Z}}
\newcommand{\g}{\mathfrak{g}}
\newcommand{\gl}{\mathfrak{gl}}
\newcommand{\h}{\mathfrak{h}}
\renewcommand{\c}{\mathfrak{c}}
\renewcommand{\t}{\mathfrak{t}}
\renewcommand{\sl}{\mathfrak{sl}}
\newcommand{\psl}{\mathfrak{psl}}
\newcommand{\z}{\mathfrak{z}}
\newcommand{\n}{\mathfrak{n}}
\newcommand{\D}{\mathcal{D}}
\renewcommand{\O}{\mathcal{O}}
\newcommand{\LL}{\mathcal{L}}
\newcommand{\N}{{\mathcal{N}}}
\newcommand{\ad}{\operatorname{ad}}
\newcommand{\Ad}{\operatorname{Ad}}
\newcommand{\Lie}{\operatorname{Lie}}
\newcommand{\SL}{\operatorname{SL}}
\newcommand{\GL}{\operatorname{GL}}
\newcommand{\Sp}{\operatorname{Sp}}
\newcommand{\Der}{\operatorname{Der\,}}
\newcommand{\la}{\langle}
\newcommand{\ra}{\rangle}
\begin{document}

\newenvironment{changemargin}[1]{%
  \begin{list}{}{%
    \setlength{\topsep}{0pt}%
    \setlength{\topmargin}{#1}%
    \setlength{\listparindent}{\parindent}%
    \setlength{\itemindent}{\parindent}%
    \setlength{\parsep}{\parskip}%
  }%
  \item[]}{\end{list}}

\parindent=0pt
\addtolength{\parskip}{0.5\baselineskip}

\subjclass[2020]{17B45, 17B50}
\title{Sandwich elements
and the Richardson property}

\author{Alexander Premet}
\address{Department of Mathematics, The University of Manchester, Oxford Road, M13 9PL, UK}
\email{alexander.premet@manchester.ac.uk}
\pagestyle{plain}
\begin{abstract}
A restricted Lie algebra $\LL$ over an algebraically closed field $\k$ of characteristic $p>0$ is said to possess the Richardson property if there exists a finite dimensional faithful restricted $\LL$-module $V$ with associated representation $\rho\colon \LL\to \gl(V)$ such that $\gl(V)=\rho(\LL)\oplus R$ where $R$ is a subspace of $\gl(V)$ such that  $[\rho(\LL),R]\subseteq R$. In \cite{P99}, the author conjectured that if $\LL$ has the Richardson property with respect to an irreducible restricted  $\LL$-module $V$ then there exists a reductive algebraic group $G$ over $\k$ such that $\LL\cong\Lie(G)$
as restricted Lie algebras.
In this note we confirm this conjecture under the assumption that $p>3$. This assumption is needed since our proof relies in a crucial way
on the classification of Lie algebras without strong degeneration obtained in \cite{P87a} for $p>5$ and in \cite{P86} for $p=5$.
\end{abstract}
\maketitle
\begin{center}
{\it In memory of Georgia Benkart}
\end{center}


\section{Preliminaries and the statement of the main result}\label{intro}
\subsection{Restricted Lie algebras}\label{restricted}
Let $\LL$ be a finite dimensional restricted Lie algebra over an algebraically closed field $\k$ of characteristic $p>0$. We denote by $[p]\colon\,\LL\to\LL$, $\,x\mapsto x^{[p]},$ the $p$-operation on $\LL$. An element $x\in\LL$ is said to be {\it nilpotent} if $x^{[p]^N}=0$ for some $N\gg 0$. It follows from Jacobson's formula (expressing $(x+y)^{[p]}-x^{[p]}-y^{[p]}$ as a sum of Lie monomials of length $p$ in $x,y\in\LL$) that 
for any $d\in\Z_{\ge 0}$ the map $\pi_d\colon\,\LL\to\LL, \ x\mapsto x^{[p]^d}$ is a morphism of affine varieties given by a collection of homogeneous polynomial functions of degree $p^d$ on $\LL$. From this it follows that
the set $\N(\LL)$ of all nilpotent elements of $\LL$ is a Zariski closed conical subset of $\LL$.
We say that $x\in\LL$ is {\it semisimple}
if it belongs to the $\k$-span of all  $x^{[p]^i}$ with $i\ge 1$. 
If $N\in\Z_{\ge 0}$ is sufficiently large, then the set $\LL_{ss}$ of all semisimple elements of $\LL$ coincides with the image of the morphism $\pi_N$. Given $x\in\LL$ we write $\la x\ra_p$ for the $\k$-span of all $x^{[p]^i}$ with $i\ge 0$. It is well--known that there exist unique $x_s,x_n\in\la x\ra_p$ such that $x_s\in\LL_{ss}$, $x_n\in\N(\LL)$, and $x=x_s+x_n$.

We say that an element $x\in\LL$ is {\it toral} if $x^{[p]}=x$. A Lie subalgebra $\t$ of $\LL$ is called {\it toral} (or a {\it torus})
if it consists of semisimple elements of $\LL$. 
Toral subalgebras of $\LL$ are abelian and their maximal dimension, denoted ${\rm MT}(\LL)$, is an important invariant of $\LL$. Furthermore, it is well--known (and straighforward to see) that any toral subalgebra $\t$ of $\LL$ has a $\k$-basis consisting of toral elemnts of $\LL$. In other words, the set $\t^{\rm tor}\,:=\,\{t\in \t\,|\,\,t^{[p]}=t\}$ is an $\mathbb{F}_p$-form of $\t$.

We denote by $e=e(\LL)$ the smallest nonnegative integer for which there is a nonempty Zariski open subset $U=U(e)$ of $\LL$ such that $\pi_e(U)\subseteq \LL_{ss}$.  It is proved in \cite{P87b}  that if $\t\subseteq \LL$ is a torus of dimension ${\rm MT}(\LL)$ then the centraliser $\c_\LL(\t)$ is a Cartan subalgebra of minimal dimension in $\LL$ and the equality 
$$\dim\LL-\dim\overline{U(e)}\,=\,\dim\c_{\LL}(\t)-\dim\t$$ holds. From this it is immediate that
$e(\LL)=0$ if and only if $\LL$ contains a toral Cartan subalgebra.

According to \cite{P90}, if ${\rm MT}(\LL)=s$ then there exist
homogeneous polynomial functions $\psi_0,\ldots,\psi_{s-1}\in\k[\LL]$ such that $\deg \psi_i=p^{s+e}-p^{i+e}$ for $0\le i\le s-1$ and
$$x^{[p^{s+e}]}\,=\,\,\sum_{i=0}^{s-1}\,\psi_i(x)\cdot x^{[p]^{i+e}}\ \qquad\ (\forall\,x\in \LL).$$
 By \cite{P90, P03b}, each $\psi_i$ has the property that $\psi_i(x^{[p]})=\psi_i(x)^p$ for all $x\in\LL$ and is invariant under the natural action of the restricted automorphism group
${\rm Aut}(\LL,[p])$ on $\k[\LL]$. This implies that the variety $\N(\LL)$ coincides with the zero locus of $\psi_0,\ldots, \psi_{s-1}$.
Since $\t\cap\N(\LL)=\{0\}$ for any $s$-dimensional torus $\t$ of $\LL$, the Affine Dimension Theorem yields
that all irreducible components of the nilpotent cone $\N(\LL)$  have dimension equal to
$\dim\LL-{\rm MT}(\LL)$. As a consequence, the homogeneous polynomial functions $\psi_0,\ldots,\psi_{s-1}$ form a regular sequence in $\k[\LL]$ and, in particular,  are algebraically independent. In \cite{P90}, the author conjectured that the variety $\N(\LL)$ is irreducible for any finite dimensional restricted Lie algebra $\LL$. This conjecture is verified in many cases, but is still open, in general.

Let $\t$ be a maximal toral subalgebra of $\LL$. The adjoint action of $\t$ on $\LL$ gives rise to a Cartan decomposition
$$\LL\,=\,\c_{\LL}(\t)\oplus\,\sum_{\gamma\in \Gamma}\,\LL_\gamma,\qquad\quad\LL_\gamma\,=\,\{x\in\LL\,|\,\,[t,x]=\gamma(t)\cdot x\ \, \mbox{for all}\ \, t\in\t\}.$$ Here  
$\Gamma=\Gamma(\LL,\t)$ is the set roots of $\LL$ with respect to $\t$, a finite subset of 
$\t^*$ consisting of {\it nonzero} ${\mathbb F}_p$-valued linear functions on $\t^{\rm tor}$, and $\LL_\gamma\ne\{0\}$ is the root subspace of $\LL$ corresponding to $\gamma\in\Gamma$. 
Since $\t$ is a maximal torus of $\LL$ its centraliser $\c_{\LL}(\t)$ is
a Cartan subalgebra of $\LL$. Given $\mathbb{F}_p$-linearly independent roots $\gamma_1,\ldots,\gamma_d\in\Gamma\subset (\t^{\rm tor})^*$ we denote by $\Gamma(\gamma_1,\ldots,\gamma_d)$ the set of all $\gamma=a_1\gamma_1+\cdots a_d\gamma_d\in\Gamma$ with $a_i\in{\mathbb F}_p$, and set $$\LL(\gamma_1,\ldots,\gamma_d)\,:=\,\c_{\LL}(\t)\oplus\textstyle{\sum}_{\gamma\in
\Gamma(\gamma_1,\ldots,\gamma_d)}\,\LL_\gamma.$$ 
The subspace $\t(\gamma_1,\ldots,\gamma_d)\,:=\,\bigcap_{i=1}^d\ker\gamma_i$ is a subtorus of codimension $d$ in $\t$. Using the perfect pairing between 
$\t^{\rm tor}$ and $(\t^{\rm tor})^*$ it is easy to check that
$\LL(\gamma_1,\ldots,\gamma_d)$ coincides with the centraliser of 
$\t(\gamma_1,\ldots,\gamma_d)$ in $\LL$. In particular, $\LL(\gamma_1\,\ldots,\gamma_d)$ is a restricted Lie subalgebra of $\LL$. Conversely, if $\t_0$ is a subtorus of $\t$, then $\c_{\LL}(\t_0)=\LL(\gamma_1,\ldots,\gamma_d)$ for some $\mathbb{F}_p$-linearly independent roots $\gamma_1,\ldots,\gamma_d\in \Gamma$ such that
$\t_0\subseteq \t(\gamma_1,\ldots,\gamma_d)$.  The restricted Lie algebra $$\LL[\gamma_1,\ldots,\gamma_d]\,:=\,
\LL(\gamma_1,\ldots,\gamma_d)/{\rm rad}\big(\LL(\gamma_1,\ldots,\gamma_d)\big)$$ is referred to as a $d$-{\it section} of $\LL$. It is straightforward to see that ${\rm MT}\big(\LL[\gamma_1,\ldots,\gamma_d]\big) \le d$.

\subsection{Motivation for the Richardson property}\label{supp}
Given $\chi\in \LL^*$ we denote by $I_\chi$ the two-sided ideal of the universal enveloping algebra $U(\LL)$ generated by all elements
$x^p-x^{[p]}-\chi(x)^p\cdot 1$ with $x\in \LL$ (these elements are central in $U(\LL)$). The 
factor-algebra 
$U_\chi(\LL):=U(\LL)/I_\chi$ is called the {\it reduced enveloping algebra} associated with $\chi$. It is well--known that for any irreducible $\LL$-module $V$ there exists a unique linear function $\chi=\chi_V\in\LL^*$ such that $(x^p-x^{[p]})_V=\chi(x)^p\cdot {\rm Id}_V$ for all $x\in\LL$. In other words, the action of $\LL$ on $V$ extends to that of the associative algebra $U_\chi(\LL)$.

 We denote by $\mathcal{I}_\chi$ the ideal of the symmetric algebra $S(\LL)$ generated by all elements $(x-\chi(x))^p$ with $x\in \LL$. Since $\mathcal{I}_\chi$ is a Poisson ideal of the Lie--Poisson algebra $S(\LL)$, the factor-algebra $S_\chi(\LL):=S(\LL)/\mathcal{I}_\chi$ caries a Poisson algebra structure induced by that of $S(\LL)$. We call
$S_\chi(\LL)$ the {\it reduced symmetric algebra} associted with $\chi$. It is well--known that both $U_\chi(\LL)$ and $S_\chi(\LL)$ have dimension $p^{\dim\LL}$ over $\k$. The Lie algebra $\LL$ acts on $U_\chi(\LL)$ and $S_\chi(\LL)$ by derivations turning both algebras into {\it restricted} $\LL$-modules. The module structures thus obtained are induced by the adjoint action of $\LL$ on itself, and we shall use the subscript ``ad'' to indicate the Lie algebra actions.
In general, it is unknown for which restricted Lie algebras the modules $U_\chi(\LL)_{\rm ad}$ and $S_\chi(\LL)_{\rm ad}$ are isomorphic.
Answering this question would be very important from the point of view of the theory of support varieties developed by Friedlander and Parshall \cite{FP1, FP2} in the context of $U_\chi(\LL)$-modules.

Let $\N_p(\LL)=\{x\in\LL\,|\,\,x^{[p]}=0\}$, the  {\it restricted nullcone} of $\LL$. The $\k$-subalgebra
of $U_\chi(\LL)$ generated by a nonzero $x\in\LL$ will be denoted by $u_\chi(x)$. It follows from the PBW theorem that for $x\in\N_p(\LL)\setminus\{0\}$ the map $ X\mapsto x-\chi(x)$ gives rise to an algebra isomorphism $u_\chi(x)\cong \k[X](X^p)$. Given a finite dimensional $U_\chi(\LL)$-module 
$M$ we write $\mathcal{V}_{\LL}(M)^\times$ for the set all {\it nonzero} $x\in \N_p(\LL)$ such that $M$ is not a free $u_\chi(x)$-module, and put
$\mathcal{V}_{\LL}(M)\,:=\,\mathcal{V}_{\LL}(M)^\times\cup\{0\}$. It is easy to see that
$\mathcal{V}_{\LL}(M)$ is a Zariski closed, conical subset of $\N_p(\LL)$. It is called the {\it rank variety} (or the {\it support variety}) of $M$. It is proved in \cite{FP2} that $M$ is a projective
$U_\chi(\LL)$-module if and only if $\mathcal{V}_{\LL}(M)=\{0\}$.

Let $E_1,\ldots, E_s$ be representatives of the equivalence classes of all irreducible $U_\chi(\LL)$-module, and set $\mathcal{V}_{\LL}(\chi)\,:=\,\mathcal{V}_{\LL}(E_1)\cup\cdots\cup
\mathcal{V}_{\LL}(E_s).$ This Zariski closed, conical subset of $\N_p(\LL)$ contains the rank varieties of all finite dimensional $U_\chi(\LL)$-modules. By an important result of Jantzen \cite{Jan86}, we have that $\mathcal{V}_{\LL}(0)=\N_p(\LL)$, and it is proved in \cite{P99} that
$\mathcal{V}_{\LL}(\chi)=\mathcal{V}_{\LL}\big(U_\chi(\LL)_{\rm ad}\big)$ for any $\chi\in\LL^*$. 

From the representation-theoretic viewpoint it would be very useful to have a more explicit description of the variety
$\mathcal{V}_{\LL}(\chi)$. Let $\LL_\chi=\{x\in\LL\,|\,\,\chi([x,\LL])=0\} $, the stabiliser of $\chi$ in $\LL$. This is a restricted Lie subalgebra of $\LL$, and it is proved in \cite[Theorem~6.1]{PSk} that $\mathcal{V}_{\LL}\big(S_\chi(\LL)_{\rm ad}\big)=\,\N_p(\LL_\chi)$.
Therefore, if we know that $U_\chi(\LL)_{\rm ad}\cong S_\chi(\LL)_{\ad}$ as $\LL$-modules, then
$\mathcal{V}_{\LL}(\chi)\,=\,\N_p(\LL_\chi)$. 

According to \cite{P99}, there is a very large class of restricted Lie algebras for which the 
$\LL$-modules $U_\chi(\LL)_{\rm ad}$ and $S_\chi(\LL)_{\ad}$ are isomorphic. It includes the 
restricted Lie algebras $\LL$ of endomorphisms of a finite dimensional vector space $V$ (over $\k$) admitting direct complements in $\gl(V)$ invariant under the adjoint action of $\LL$ on $\gl(V)$ as well as the so-called
{\it saturated} Lie subalgebras of such Lie algebras; see \cite[\S 3]{P99} for relevant definitions and more detail. 
\subsection{The statement of the main result} A restricted Lie algebra is said to possess the {\it Richardson property} if there exists
a finite dimensional faithful restricted $\LL$-module $V$ with associated representation $\rho\colon\,\LL\to\gl(V)$ such that $\gl(V)=\rho(\LL)\oplus R$ where $R$ is a subspace of $\gl(V)$ such that $[\rho(\LL),R]\subseteq R$. According to  
\cite[Prop.~2.3]{P99}, if $\LL$ has the Richardson property then for any $\chi\in\LL^*$ the  $\LL$-modules $S_\chi(\LL)_{\rm ad}$ and $U_\chi(\LL)_{\rm ad}$ are isomorphic.

In \cite[3.6]{P99}, the author speculated that if $\LL$ possesses the Richardson property with respect to an irreducible faithful restricted $\LL$-module, then there exists a reductive algebraic group $G$ over $\k$ such that $\LL\cong\Lie(G)$ as restricted Lie algebras. This conjecture  was recently mentioned in the survey article \cite{BF} by Benkart--Feldvoss as one of the interesting open problems in the theory of modular Lie algebras; see Problem~7(c) in {\it loc.\,cit.} 

Let $G$ be a connected reductive $\k$-group. Recall that $p={\rm char}(\k)$ is a good prime for $G$ if all coefficients of the positive roots of $G$ expressed via a basis of simple roots are less than $p$. If $p$ is not good for $G$ then we say that $p$ is {\it bad}. The bad primes of connected reductive groups lie in the set $\{2,3,5\}$ and $p=5$ is bad for $G$ if and only if $G$ has a component of type ${\rm E}_8$.  A good prime $p$ is called {\it very good} if $G$ has no components of type ${\rm A}_{kp-1}$ with $k\ge 1$.
We say that $G$ is {\it standard} if $p$ is a good prime for $G$, the derived subgroup of $G$ is simply connected, and the Lie algebra $\g={\rm Lie}(G)$ admits a non-degenerate $(\Ad G)$-invariant symmetric bilinear form. A rational representation $\varphi\colon\, G\to \GL(V)$
is called {\it infinitesimally irreducible} if the differential ${\rm d}_e\varphi\colon\g\to\gl(V)$
is an irreducible representation of the Lie algebra $\g$.

The main goal of this note is to confirm the above-mentioned conjecture under the assumption that $p>3$. More precisely, we prove the following:
\begin{Theorem}\label{main-thm} Suppose ${\rm char}(\k)=p>3$ and
let $\LL$ be a restricted Lie algebra over $\k$ possessing the Richardson property with respect to a finite dimensional faithful irreducible restricted representation $\rho\colon\, \LL\to \gl(V)$. 
Then there exists a standard reductive algebraic $\k$-group $G$ with Lie algebra $\g$ and an infinitesimally irreducible rational representation 
$\varphi\colon\,G\to\GL(V)$ such that
$({\rm d}_e\varphi)(\g)=\rho(\LL)$. Moreover,
$\LL$ and $\g$ are isomorphic as restricted Lie algebras.  
\end{Theorem}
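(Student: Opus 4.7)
My strategy is to exploit the classification of finite-dimensional restricted Lie algebras without strong degeneration obtained in \cite{P87a, P86}, valid precisely in the range $p>3$ assumed in the theorem. The argument splits naturally into three blocks: a sequence of structural reductions using the irreducibility of $V$; the core technical step of showing that the Richardson property rules out the existence of nonzero sandwich elements in $\LL$ (elements $c$ with $(\ad c)(\ad x)(\ad c)=0$ for all $x\in\LL$); and finally a reconstruction of the reductive group $G$ and the representation $\varphi$ from $\LL$ and $V$.

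\textbf{Reductions.} Via the faithful $\rho$, identify $\LL$ with $\rho(\LL)\subseteq\gl(V)$. Schur's lemma applied to the irreducible restricted $\LL$-module $V$ forces the center $Z(\LL)$ to act by scalars on $V$, so by faithfulness $Z(\LL)$ embeds into $\k\cdot\Id_V$ and is a torus of dimension at most one; the main content of the theorem therefore concentrates on the derived subalgebra $[\LL,\LL]$. The Richardson decomposition $\gl(V)=\rho(\LL)\oplus R$ with $[\rho(\LL),R]\subseteq R$ provides an $\ad\LL$-equivariant projection $\pi\colon\gl(V)\to\rho(\LL)$ that will be the main technical device throughout.

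\textbf{Main step: ruling out sandwich elements.} Suppose for contradiction that $c\in\LL$ is a nonzero sandwich element. The projection $\pi$ transfers the sandwich identity $(\ad c)(\ad x)(\ad c)=0$ on $\LL$ into a weakened identity inside the ambient classical Lie algebra $\gl(V)$ modulo $R$, and careful bookkeeping with the invariance of $R$ under $\ad\rho(\LL)$ should propagate the degeneracy far enough to contradict the irreducibility and faithfulness of $V$. Running in parallel, the support-variety machinery recalled in \S\ref{supp} -- the equality $\V_\LL(S_0(\LL)_{\ad})=\N_p(\LL)$ from \cite[Thm.~6.1]{PSk}, Jantzen's theorem $\V_\LL(0)=\N_p(\LL)$ \cite{Jan86}, and the description of $\N(\LL)$ via the homogeneous invariants $\psi_0,\ldots,\psi_{s-1}$ of \cite{P90} -- should translate a putative sandwich into an $\ad\LL$-invariant geometric object in $\N_p(\LL)$ incompatible with the existence of the Richardson complement. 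I expect this paragraph to contain the main obstacle of the proof, and indeed the reason sandwich elements feature in the title.

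\textbf{Classification and lifting.} Once strong degeneration is excluded, the classification of \cite{P87a, P86} identifies every simple subquotient of $\LL$ with the Lie algebra of a simple algebraic group, so $[\LL,\LL]$ is a direct sum of classical simple Lie algebras in the sense of Seligman. Combined with the center analysis above and after passing to an appropriate isogenous cover, one assembles a connected reductive $\k$-group $G$ with $\Lie(G)\cong\LL$ as restricted Lie algebras; a short case analysis excluding the problematic types (${\rm E}_8$ for $p=5$ for goodness, and ${\rm A}_{kp-1}$ for the invariant bilinear form) via the Richardson hypothesis yields standardness. Finally, the irreducible restricted $\g$-module $V$ lifts through highest-weight theory, using the simple connectedness of the derived subgroup of $G$, to a rational representation $\varphi\colon G\to\GL(V)$ with $d_e\varphi=\rho$, completing the proof.
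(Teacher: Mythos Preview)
Your overall architecture matches the paper's---rule out sandwich elements in $\LL/\z(\LL)$, invoke \cite{P87a,P86}, then reconstruct $G$ and $\varphi$---but your ``main step'' misses the actual mechanism, and the routes you speculate on do not lead anywhere. The paper's device (Lemma~\ref{L1}) is not a propagation of the sandwich identity from $\LL$ into $\gl(V)$, nor support-variety geometry; it goes the other way. For any nilpotent $e\in\LL\subseteq\gl(V)$, Jacobson's argument produces an $\sl_2$-triple $\{e,h,f\}$ in $\gl(V)$; writing $f=f'+f''$ with $f'\in\LL$ and $f''\in R$, the $\ad\LL$-invariance of $R$ forces $[e,[e,f']]\in\k^\times e$, so $e\in[e,[e,\LL]]$. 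If $\bar c\in\LL/\z(\LL)$ were a nonzero sandwich, one first shows $\z(\LL)$ is toral (same projection trick, applied to a semisimple $h$), adjusts $c$ to be nilpotent, and obtains $c\in[c,[c,\LL]]\subseteq\z(\LL)$, a contradiction. Your support-variety idea is a red herring: for $\chi=0$ the equality $\V_\LL(0)=\N_p(\LL)$ is Jantzen's theorem and holds for \emph{every} restricted Lie algebra, so it cannot distinguish sandwiches. Note also that you must work modulo $\z(\LL)$, not in $\LL$ itself, since any nonzero central element is already a sandwich.

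Your reconstruction sketch also underestimates the remaining work. The classification only yields $[\g,\g]\subseteq\LL/\z(\LL)\subseteq\g$ for an adjoint semisimple $G$; one needs the normaliser argument of Proposition~\ref{P1} to get equality, and Lemma~\ref{L2} to reconcile the given $[p]$-structure on $\LL$ with the natural one (the point being that otherwise some $e_\beta$ would act freely on $V$, hence on the summand $\LL$ of $\gl(V)$, contradicting $(\ad e_\beta)^{p-1}=0$). Components of type ${\rm A}_{kp-1}$ are not \emph{excluded}: at most one can occur (Lemma~\ref{L4}, via self-duality of $\gl(V)$), and when it does one takes $\GL_{kp}$ rather than $\SL_{kp}$ in $\widetilde G$, after a cohomological analysis of central extensions (Lemma~\ref{L5}, Proposition~\ref{P2}). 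Finally, ruling out ${\rm E}_8$ for $p=5$ is not a routine case check: the Richardson complement is used to show that the regular nilpotent $(\Ad H)$-orbit is smooth, and then Springer's theorem \cite{Spr66} forces $p$ to be good (Lemma~\ref{L7}).
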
 By Schur's lemma, if $\LL$ admits a faithful irreducible representation then the centre of $\LL$ has dimension $\le 1$. 
In the final part of this note we  
show that if $p$ is a very good prime for a standard algebraic group $G$ whose centre has dimension $\le 1$, then the restricted Lie algebra $\g=\Lie(G)$ possesses the Richardson property with respect to a faithful irreducible representation $\rho\colon\,\g\to\gl(V)$ such that $p\nmid \dim V$. Furthermore, $\rho={\rm d}_e\varphi$, where $\varphi\colon\,G\to\GL(V)$
is an infinitesimally irreducible rational representation, and the 
$(\Ad G)$-invariant
trace form $(X,Y)\mapsto {\rm tr}\big(\rho(X)\circ \rho(Y)\big)$ on $\g$ is non-degenerate. 


In characteristics $2$ and $3$, we show that if $\LL$ has the Richardson property with respect to a finite dimensional faithful restricted $\LL$-module $V$, then for any nonzero $e\in\N(\LL)$ there exists an element $h\in \LL_{ss}$ such that $[h,e]=e$. In conjunction with some results of \cite{P87b} and \cite{P90} mentioned in Subsection~\ref{restricted} this implies that all Cartan subalgebras of $\LL$ are toral and have the same dimension equal to  ${\rm MT}(\LL)$.
In characteristics $3$, we show that any $e\in\N(\LL)$ lies in the subspace $[e,[e,\LL]]$. This enables us to deduce that the solvable radical of any subalgebra $\LL(\gamma_1,\ldots, \gamma_d)$ is toral and coincides with the centre of  $\LL(\gamma_1,\ldots, \gamma_d)$. We then use Skryabin's classification 
\cite{Sk} of simple $3$-modular Lie algebras of toral rank $1$ to show that for every $\gamma\in\Gamma(\LL,\t)$ the radical of $\LL(\gamma)$ coincides with $\ker\gamma\,=\,\{t\in\t\,|\,\, \gamma(t)=0\}$ and $\LL[\gamma]=\LL(\gamma)/\ker\gamma$ is one of $\sl_2$ or $\psl_3$.
It would be interesting to classify the finite dimensional restricted Lie algebras over fields of characteristic $3$ having these properties.
\subsection{Lie algebras without strong degeneration}\label{strong}
The key idea of the proof of Theorem~\ref{main-thm} is to show that if 
$\LL$ has the Richardson property then the factor-algebra $\LL/\z(\LL)$ does not contain nonzero elements $c$ with $(\ad\,c)^2=0$ and then apply the classification of such Lie algebras obtained in \cite{P86, P87a}. This is the main reason for us to impose the assumption that $p>3$.

An element $c\in\LL$ is called a {\it sandwich element} if $(\ad c)^2=0$. This term, coined by A.I.~Kostrikin, has to do the fact that in characteristic $p>2$ any $c\in\LL$ with $(\ad c)^2=0$ satisfies the {\it sandwich identity} $$(\ad c)\circ (\ad x)\circ (\ad c)=0\qquad\quad(\forall\,x\in\LL).$$ It is immediate from this identity that the set  $\mathfrak{C}(\LL)$ of all sandwich elements of $\LL$ is closed under taking Lie brackets.
In conjunction with the Engel--Jacobson theorem on weakly closed sets this shows that $\mathfrak{C}(\LL)$ generates a nilpotent Lie subalgebra of $\LL$ invariant under the automorphism group of $\LL$.
We say that $\LL$ is {\it strongly degenerate} if
$\mathfrak{C}(\LL)\ne \{0\}$.

Sandwich elements play an important role in the study of Engel Lie algebras and in the classification theory of finite dimensional simple Lie algebras over algebraically closed fields of 
characteristic $p>3$; see \cite{K90}, \cite{KZ90}, \cite{P94}, \cite{PS1}, \cite{Str17}, \cite{Z91}.
 In his talks at the ICM's in Stockholm (1962) and Nice (1971), Kostrikin conjectured that
 a finite dimensional simple Lie algebra $L$ over an algebraically closed field of characteristic $p>3$ is either strongly degenerate or classical in the sense of Seligman, that is, has the form $L=[\Lie(G),\Lie(G)]$ for some simple algebraic $\k$-group $G$ of adjoint type. Kostrikin's conjecture attracted a lot of attention 
 in 1960's and 1970's and was first confirmed under various additional assumptions on $L$; see \cite{Ko67}, \cite{Ja71}, \cite{St73} and \cite{Be77}\footnote{This work of Georgia Benkart is based on her Yale PhD thesis \cite{Be74} written under the supervision of Professor Nathan Jacobson.}. In full generality, the conjecture was proved in \cite{P87a} for $p>5$ and in \cite{P86} for $p=5$. 

In fact, a slightly more general result was proved in \cite{P87a, P86}.
A finite dimensional Lie algebra $L$ over an algebraically closed field of characteristic $p>3$ is
called {\it almost classical} if there exists
a semisimple  algebraic $\k$-group $G$ of adjoint type such that $L$ is isomorphic to a subalgebra of $\Lie(G)$ containing  $[\Lie(G),\Lie(G)]$. Note that under our assumptions on $p$  the Lie algebra $\Lie(G)$ identifies with the derivation algebra of the restricted Lie algebra
$[\Lie(G),\Lie(G)]$. The latter decomposes into a direct sum of classical simple Lie algebras which may include components isomorphic to $\psl_{rp}(\k)$ with $r>0$.
The main result of \cite{P86, P87a} states that for $p>3$ a finite dimensional Lie algebra $L$ over $\k$ is almost classical if and only if $\mathfrak{C}(L)=\{0\}$. 

{\bf Acknowledgement.} This work was started during the author's stay at MSRI (Berkeley) 
in February--April 2018. I would like to thank the Mathematical Sciences Research Institute for its hospitality and creative atmosphere during the programme ``Group Representation Theory and Applications''.
\section{The case of arbitrary Richardson $\LL$-modules}\label{Prelim}
\subsection{}\label{intro1}
From now on all $\LL$-modules are assumed to be finite dimensional and restricted. We say that a faithful $\LL$-module $V$ is {\it Richardson} if there exists a subspace $R$ of $\gl(V)$
such that $[\LL,R]\subseteq R$ and $\gl(V)=\LL\oplus R$, where we identify $\LL$ with its image in $\gl(V)$. 
Given a module $M$ for a Lie algebra $\g$ and $v\in M$ we denote by $\g_v$ the stabiliser of $v$ in $\g$. We write $\z(\g)$ for the centre of $\g$ and ${\rm rad}\, \g$ for the {\it radical} of $\g$, the largest solvable ideal of $\g$.

The following
lemma is inspired by a very old observation of Jacobson.
\begin{Lemma}\label{L1}
	Let $V$ be a Richardson module for $\LL$ (not necessarily irreducible).
	\begin{itemize}
		\item[(1)]\, If $p>2$ then any $e\in\N(\LL)$ lies in the subspace $[e,[e,\LL]]$.
		
		\smallskip
		\item[(2)]\, If $p>0$ then for any $e\in\N(\LL)$ there is an element $h\in \LL_{ss}$ such that $[h,e]=e$. 
		\end{itemize}
	\end{Lemma}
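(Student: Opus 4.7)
The plan is to prove both parts by first establishing the analogous matrix identities inside the ambient $\gl(V)$ via explicit Jordan-block-level constructions, and then transferring them to $\LL$ using the Richardson splitting $\gl(V)=\LL\oplus R$. Throughout, identify $\LL$ with its image $\rho(\LL)\subseteq\gl(V)$, so $e\in\N(\LL)$ becomes a nilpotent endomorphism of $V$ (since $\rho$ is restricted). The Richardson condition $[\LL,R]\subseteq R$ is tailor-made for the transfer: for any $X=X_\LL+X_R\in\gl(V)$, the identities $[e,X]=[e,X_\LL]+[e,X_R]$ and $[e,[e,X]]=[e,[e,X_\LL]]+[e,[e,X_R]]$ respect the decomposition $\gl(V)=\LL\oplus R$, since $[\LL,R]\subseteq R$ forces each $R$-term to land in $R$. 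Consequently, any element of $[e,[e,\gl(V)]]$ that happens to lie in $\LL$ already lies in $[e,[e,\LL]]$, and similarly $[e,\gl(V)]\cap\LL\subseteq[e,\LL]$.

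For part~(2), the first task is to produce a toral $H\in\gl(V)$ with $[H,e]=e$. Decompose $V$ into Jordan blocks of $e$; on a block of size $n$ with basis $v_1,\dots,v_n$ satisfying $ev_i=v_{i-1}$, declare $Hv_i=(1-i)v_i$. The eigenvalues lie in $\mathbb{F}_p$, so $H^{[p]}=H$, and a direct check gives $[H,e]=e$. Now split $H=H_\LL+H_R$; projecting $[H,e]=e$ onto $\LL$ gives $[H_\LL,e]=e$ with $H_\LL\in\LL$. Finally, let $H_\LL=s+n$ be the Jordan decomposition in the restricted Lie algebra $\LL$, so $s\in\LL_{ss}$, $n\in\N(\LL)$, and $[s,n]=0$. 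Because $\rho$ is a faithful restricted homomorphism, $\rho(s)$ is diagonalizable and $\rho(n)$ is nilpotent in $\gl(V)$; hence $\ad s$ and $\ad n$ restrict to commuting semisimple and nilpotent operators on $\LL\subseteq\gl(V)$, respectively. Since $e$ is an actual eigenvector of $\ad H_\LL=\ad s+\ad n$ with eigenvalue~$1$, the standard uniqueness principle for the Jordan decomposition of a commuting sum forces $\ad s(e)=e$ (and $\ad n(e)=0$); thus $h:=s\in\LL_{ss}$ satisfies $[h,e]=e$.

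For part~(1), I would run the analogous construction to produce an $\sl_2$-triple $(e,H,F)$ in $\gl(V)$. On a Jordan block of size $n$ as above, set $Fv_i=i(n-i)v_{i+1}$; a direct computation gives $[e,F]=H$ with $Hv_i=(n-2i+1)v_i$ and $[H,e]=2e$. The resulting identities involve only integer coefficients, so the $\sl_2$-triple exists in any characteristic, and summing over blocks yields $[e,[e,F]]=[e,H]=-2e$ in $\gl(V)$. Splitting $F=F_\LL+F_R$ and projecting onto $\LL$ yields $[e,[e,F_\LL]]=-2e$, and because $p>2$ one may divide by $-2$ to conclude $e=[e,[e,-F_\LL/2]]\in[e,[e,\LL]]$.

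The main technical point I expect is the Jordan-decomposition step in part~(2): one needs that the parts $(H_\LL)_s$ and $(H_\LL)_n$, defined abstractly in $\LL$ via $\la H_\LL\ra_p$, act on $\gl(V)$ as a genuine semisimple-plus-nilpotent decomposition of the matrix $\rho(H_\LL)$, so that the eigenvector argument applies. This rests on the faithfulness of $\rho$ and on uniqueness of matrix Jordan decomposition, both of which are standard but are precisely where the hypothesis that $V$ is a faithful restricted representation is essential.
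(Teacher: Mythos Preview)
Your proof is correct and follows the same overall strategy as the paper: first obtain the needed identities inside $\gl(V)$, then transfer them to $\LL$ via the splitting $\gl(V)=\LL\oplus R$ and finally pass to the semisimple part. The only difference is cosmetic: where the paper invokes Jacobson's $\sl_2$-embedding lemma (for $p>2$) and a $\Z$-grading result from \cite{P03a} (for arbitrary $p$) to produce $f$ and $y$ in $\gl(V)$, you construct $F$ and $H$ explicitly on Jordan blocks, which is a perfectly valid and slightly more self-contained substitute.
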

\begin{proof}
	Thanks to our conventions we may assume that 
	$e$ is a nonzero nilpotent element of $\gl(V)$. We first look for $x,y\in \gl(V)$ such that 
	$e=[e,[e,x]]$ and $e=[y,e]$. For $p>2$ one can argue as in \cite[Lemma~2]{Jac} to observe that $e$ can be included into an $\sl_2$-triple $\{e,h,f\}\subset\gl(V)$. For $p>0$, the Lie algebra $\gl(V)$ admits
	a $\Z$-grading $\gl(V)\,=\,\bigoplus_{i\in\Z}\,\gl(V)_i$ such that
	 $e\in \gl(V)_2$ and $[e,\gl(V)_0]=\gl(V)_2$; see \cite[Theorem~A(ii)]{P03a}. This result is applicable in arbitrary characteristic since $\gl(V)$ admits a non-degenerate $(\Ad \GL(V))$-invariant trace form. 
	 
	 As a consequence, there is $y\in \gl(V)$ such that $[y,e]=e$. Write $y=y'+y''$ with
	 $y'\in \LL$ and $y''\in R$. As $e\in\LL$ and $[\LL,R]\subseteq R$ it must be that $[y'',e]=0$ and $[y',e]=e$. As $\LL$ is a restricted Lie subalgebra of $\gl(V)$ we can replace $y'$
	 by its semisimple part $y'_s\in\la y'\ra_p$ to find a semisimple element $h\in \LL$ such that $[h,e]=e$. This proves (2).
	 
	 If $p>2$ then $[e,[e,f]]=-2e\in\k^\times e$. Write $f=f'+f''$ with $f'\in\LL$ and $f''\in R$.
	 Since $(\ad e)^2$ preserves both $\LL$ and $R$ it must be that $[e,[e,f']]\in\k^\times e$. This proves (1). 
	 \end{proof}
 \subsection{}\label{S0}
	 There are examples of restricted Lie algebras admitting infinite families of indecomposable Richardson modules.
	 Indeed, suppose $\LL=\sl_2$ and $p>3$. Let $V(m)$ be the Weyl module for the $\k$-group $\SL(2)$ with highest weight $m\in\Z_{\ge 0}$. Differentiating the rational action of $\SL(2)$ endows $V(m)$ with a natural restricted $\LL$-module structure. 
	 According to \cite{P91} the restricted $\LL$-module $V(m)$ is indecomposable if and only if $p\nmid (m+1)$. It is well--known that the $\LL$-module $V(m)$ is simple if and only if $m\le p-1$. Furthermore, the Steinberg module ${\rm St}=V(p-1)\cong V(p-1)^*$ is simple and projective over the restricted enveloping algebra $U_0(\LL)$. 
	 
	 Suppose $m=kp+l$ where $k\in \Z_{\ge 0}$ and $1\le l \le (p-3)/2$. Then 
	 \cite[Lemma~2.6(i)]{P91} shows that $$V(m)\otimes V(m)^*\cong\,P\oplus \bigoplus_{i=0}^{l}V(2i)$$ 
	 where $P$ is a projective $U_0(\LL)$-module isomorphic to a direct sum of $k$ copies of 
	 ${\rm St}\otimes V(kp+2l+1)$. The Weyl module $V(kp+2l+1)$ has two composition factors
	 $V(2l+1)$ and $V(p-3-2l)$, while the composition factors of the projective $U_0(\LL)$-module ${\rm St}\otimes V(2)$ are $V(p-3)$, $V(1)$ and $V(p-1)$; see \cite[Prop.~1.6 and Theorem~1.11(c)]{BO}.
	 Since $\{2l+1,p-3-2l\}\cap \{1,p-3,p-1\}=\varnothing$ we have that
	 $${\rm Hom}_{\LL}\big(V(2), {\rm St}\otimes V(kp+2l+1)\big)\cong\, 
	 {\rm Hom}_{\LL}\big(V(2)\otimes{\rm St},V(kp+2l+1\big)\,=\,0,$$
	 implying that ${\rm Hom}_{\LL}(V(2), P)=0$. 
	 Since
	 $V(m)\otimes V(m)^*\cong \,\gl(V(m))$ and $V(2)\cong \LL$ as $\LL$-modules, the above yields that  the $\LL$-module $\gl(V(m))$ contains a direct summand $R$ of codimension $3$ such that
	 ${\rm Hom}_{\LL}(\LL,R)=0$. Identifying the Lie algebra $\LL$ with its copy in $\gl(V(m))$ we now get $\LL\cap R=\{0\}$ and hence $\gl(V(m))\,=\,\LL\oplus R$. As a result, each
	 $V(m)$ with $1\le l\le (p-3)/2$ and $k\in\Z_{\ge 0}$ is an indecomposable Richardson module for $\LL$.  
	 
	 If $\LL=\sl_2$ and $p=3$ then $\LL\cong V(p-1)$ is a projective module over $U_0(\LL)$. This means that {\it any} non-trivial finite dimensional restricted $\LL$-mudule is Richardson for $\LL$.
	 Such instances are, of course, extremely rare.
\subsection{} Suppose $\LL$ admits a Richardson module $V$ (not necessarily irreducible). Our next result provides some insight into the structure of $\LL$.
	 \begin{Theorem}\label{T1}
	 	The following are true:
	 	\begin{itemize}
	 	\item[(1)] If $p\ge 2$ the all Cartan subalgebras of $\LL$ are toral of dimension equal to ${\rm MT}(\LL)$.
	 	
	 	\smallskip
	 	
	 	\item[(2)] Suppose $p>2$ and let $\t$ be any toral subalgebra of $\LL$. Then ${\rm rad}\,\c_{\LL}(\t)\,=\,\z(\c_{\LL}(\t))$ is a toral subalgebra of $\LL$ and the factor-algebra
	 	$\c_{\LL}(\t)/\z(\c_{\LL}(\t))$ has no nonzero sandwich elements.
	 	
	 	\smallskip
	 	
	 	\item[(3)] If $p>3$ then the factor-algebra $\LL/\z(\LL)$ is almost classical.	
	 		\end{itemize}
	 	\end{Theorem}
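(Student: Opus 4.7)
My approach has a single engine that serves all three parts: the centraliser $\M := \c_{\LL}(\t)$ of any toral subalgebra $\t$ inherits the Richardson property from $\LL$, after which Lemma~\ref{L1} supplies the needed constraints. To see the descent, I decompose the Richardson module $V = \bigoplus_\lambda V_\lambda$ into $\t$-weight spaces; taking $\t$-weight-zero parts of $\gl(V) = \rho(\LL) \oplus R$ yields $\prod_\lambda \gl(V_\lambda) = \M \oplus R^0$ with $[\M, R^0] \subseteq R^0$. Then $R^0 \oplus \bigoplus_{\lambda \neq \mu}\Hom(V_\lambda, V_\mu)$ is an $\M$-stable complement to $\M$ inside $\gl(V)$, witnessing the Richardson property for $\M$.

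For Part~(1), I apply the same idea to any Cartan subalgebra $\h$ by using the Fitting decomposition $\LL = \h \oplus \LL^1(\h)$ (valid since $\h$ equals its own Fitting null component under $\ad\h$) to produce an $\h$-stable complement to $\h$ inside $\gl(V)$, namely $\LL^1(\h) \oplus R$. Thus $\h$ itself has the Richardson property, and because $\h$ is nilpotent, Lemma~\ref{L1}(2) applied inside $\h$ rules out nonzero nilpotent elements: any such $e$ would admit $h \in \h_{ss}$ with $[h,e] = e$, contradicting the nilpotency of $\ad_{\h}h$. So every Cartan is toral, with $\dim\h \leq {\rm MT}(\LL)$; applying the same argument to $\M = \c_{\LL}(\t_{\max})$ yields $\c_{\LL}(\t_{\max}) = \t_{\max}$, which via the cited result of \cite{P87b} identifies the minimum Cartan dimension as ${\rm MT}(\LL)$ and forces equality.

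For Part~(2) with $p > 2$ and $\M = \c_{\LL}(\t)$, I establish four claims. (a) $\mathfrak{C}(\M) = \z(\M)$: Lemma~\ref{L1}(1) gives $c \in [c,[c,\M]] = 0$ for a nilpotent sandwich $c$, and a general sandwich $c$ Jordan-decomposes with $c_s \in \z(\M)$ (since $\ad c$ is nilpotent, its semisimple part $\ad c_s$ vanishes) and $c_n$ a nilpotent sandwich, so $c_n = 0$. (b) $\M/\z(\M)$ has no sandwich elements: if $\bar c$ is a sandwich with lift $c = c_s + c_n$, then $(\ad c_n)^2(\M) \subseteq \z(\M)$ forces $c_n \in \z(\M)$ by Lemma~\ref{L1}(1), while $\bar c_s \in \z(\M/\z(\M))$ translates to $\im\ad c_s \subseteq \z(\M) \subseteq \ker\ad c_s$, so $(\ad c_s)^2 = 0$, and semisimplicity of $\ad c_s$ gives $\ad c_s = 0$. (c) ${\rm rad}\,\M = \z(\M)$: any minimal nonzero abelian ideal $\bar{\mathfrak{a}}$ of $\M/\z(\M)$ with preimage $\mathfrak{a}$ would satisfy $(\ad c)^2(\M) \subseteq [c,\mathfrak{a}] \subseteq \z(\M)$ for each $c \in \mathfrak{a}$, creating sandwich elements in $\M/\z(\M)$ and contradicting (b); combined with the trivial inclusion $\z(\M) \subseteq {\rm rad}\,\M$, this forces equality. (d) $\z(\M)$ is toral: a nonzero nilpotent $c \in \z(\M)$ would, by Lemma~\ref{L1}(2), admit $h \in \M_{ss}$ with $[h,c] = c \ne 0$, incompatible with centrality.

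Part~(3) is the specialisation of Part~(2) at $\t = 0$: $\LL/\z(\LL)$ has no nonzero sandwich elements, so for $p > 3$ the classification theorem of \cite{P86, P87a} forces $\LL/\z(\LL)$ to be almost classical. The principal obstacles I foresee are the careful bookkeeping in descending Richardsonness to centralisers and Cartan subalgebras (first paragraph) and the semisimple subcase in (b), where one must combine $\im\ad c_s \subseteq \z(\M) \subseteq \ker\ad c_s$ with the semisimplicity of $\ad c_s$ acting on $\M$; the invocation of the deep classification in Part~(3) is precisely why the hypothesis $p > 3$ is essential.
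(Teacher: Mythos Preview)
Your proof is correct and follows essentially the same strategy as the paper: descend the Richardson property to the centraliser of a torus (or to a Cartan subalgebra), then apply Lemma~\ref{L1} to rule out nilpotent central elements and sandwich elements in the quotient by the centre. The only noteworthy difference is in the descent mechanism: where you decompose $\gl(V)$ via the $\t$-weight spaces of $V$ and take the zero-weight component, the paper simply enlarges $R$ by the nonzero $\t$-root spaces of $\LL$ itself, setting $\widetilde{R}=R\oplus\sum_{\gamma\in\Gamma}\LL_\gamma$; both produce an $\M$-stable complement, but the paper's version is slightly more direct. In Part~(2) your ordering also differs (the paper first proves $\z(\M)$ toral via Lemma~\ref{L1}(1) and then uses this to reduce to a nilpotent lift, whereas you handle the semisimple and nilpotent parts of a lift separately and deduce toralness of $\z(\M)$ afterwards via Lemma~\ref{L1}(2)), but the content is the same.
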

	\begin{proof}
		Let $\h$ be a Cartan subalgebra of $\LL$. It is well--known (and easy to see) that 
		$\h=\c_{\LL}(\t')$ where $\t'=\h\cap\LL_{ss}$, a maximal toral subalgebra of $\LL$. Let $\Gamma$ be the set
		of roots of $\LL$ with respect to $\t'$ and 
		$\widetilde{R}=R\oplus \sum_{\gamma\in\Gamma}\,\LL_\gamma$. Then $\gl(V)=\h\oplus \widetilde{R}$ and $[\h,\widetilde{R}]\subseteq \widetilde{R}$, meaning that $V$ is a Richardson module for $\h$. If $\h$ contains a nonzero nilpotent element, $e$ say, then  Lemma~\ref{L1}(2) yields that there is an  $h\in \h$ such that $[h,e]=e$. But then $(\ad h)^n(e)=e\ne 0$ for all $n\in\Z_{>0}$. Since $\h$ is nilpotent this is impossible. Since $\h$ is a restricted subalgebra of $\LL$, the semisimple and nilpotent parts of any element of $\h$ lie in $\h$. Consequently,  $\h=\h_{ss}=\t'$. 
		
		Now let $\t$ be any torus of maximal dimension in $\LL$. Then
		$\dim \t\ge \dim \t'$ and $\dim \c_{\LL}(\t)\le\dim \h$ by the main result of \cite{P87b} 
		(see Subsection~\ref{restricted} for a related discussion). Since $\h=\t'$ we now deduce that
		$\dim\h=\dim \t'={\rm MT}(\LL)$, proving (1).
		
		Suppose $p>2$ and let $\t$ be a toral subalgebra of $\LL$ (possibly zero). Let $\LL_0=\c_{\LL}(\t)$ and write $\Gamma(\LL,\t)$ for the set of roots of $\LL$ with respect to $\t$ (possibly empty). Now set $\widetilde{R}:=R\oplus \sum_{\gamma\in\Gamma(\LL,\t)}\,L_\gamma$. Then $\gl(V)=\LL_0\oplus\widetilde{R}$ and $[\LL_0, \widetilde{R}]\subseteq \widetilde{R}$, showing that $V$ is a Richardson module for $\LL_0$. Let $\z$ be the centre of $\LL_0$, a restricted ideal of $\LL_0$. If $\z$ contains a nonzero nilpotent element, $e$ say, then
		$e\in [e,[e,\LL_0]]$ by Lemma~\ref{L1}(1). But then $0\ne e\in [e,\z]$, a contradiction. 
		Hence $\z$ is toral. 
		
		Suppose $\LL_0/\z$ contains a nonzero sandwich element. Then there is $c\in \LL_0\setminus\z$ such that $[c,[c,\LL_0]]\subseteq\z$. Since $p\ge 3$ it must be that  $c^{[p]}\in\z$. As $\z$ is toral, we may replace $c$ by a suitable element of the form $c+z$ with $z\in\z$ to assume further that $c$ is nilpotent. By Lemma~\ref{L1}(1), we then have $c\in[c,[c,\LL_0]]\subseteq \z$, a contradiction. Hence $\LL_0/\z$ has no nonzero
		sandwich elements. In particular, $\LL_0/\z$ has no nonzero abelian ideals.
		As a consequence, the radical of $\LL_0/\z$ is trivial.
		
		Let $\mathfrak{r}={\rm rad}\,\LL_0$. Then $\z\subseteq \mathfrak{r}$ and $\mathfrak{r}/\z$ is a solvable ideal of $\LL_0/\z$. The preceding remark shows that 
		$\mathfrak{r}=\z$, proving (2). Part (3) now follows from (2) and the main results of \cite{P87a} and \cite{P86} (see our discussion in Subsection~\ref{strong} for more detail).
	\end{proof}
In characteristic $3$, one can use Skryabin's description of simple Lie lagebras of rank $1$ to get more information on the $1$-sections of $\LL$.
\begin{Corollary}
Suppose $p=3$ and let $\alpha\in\Gamma(\LL,\t)$ where $\t$ is a maximal torus of $\LL$. Then
${\rm rad}\,\LL(\alpha)\,=\,\t(\alpha)\,=\,\ker\alpha$ and $\LL[\alpha]=\LL(\alpha)/\t(\alpha)$ is one of $\sl_2$ or
$\psl_3$.
\end{Corollary}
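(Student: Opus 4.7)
The approach is to apply Theorem~\ref{T1}(2) to the codimension-one subtorus $\t(\alpha)=\ker\alpha$ of $\t$ and then invoke Skryabin's classification of simple $3$-modular Lie algebras of toral rank $1$. Since $\c_\LL(\ker\alpha)=\LL(\alpha)$, Theorem~\ref{T1}(2) yields at once that $\mathrm{rad}\,\LL(\alpha)=\z(\LL(\alpha))$ is a toral subalgebra of $\LL$ and that $\LL[\alpha]=\LL(\alpha)/\z(\LL(\alpha))$ contains no nonzero sandwich elements.

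Next I would identify $\z(\LL(\alpha))$ with $\ker\alpha$. The inclusion $\ker\alpha\subseteq\z(\LL(\alpha))$ is immediate from the definition of $\LL(\alpha)$. For the converse, $\z(\LL(\alpha))$ is toral and commutes with the maximal torus $\t\subseteq\LL(\alpha)$. Jacobson's formula shows that the sum of commuting toral elements is toral, so $\t+\z(\LL(\alpha))$ is an abelian toral subalgebra of $\LL$; maximality of $\t$ forces $\z(\LL(\alpha))\subseteq\t$. Any $t\in\t$ centralizing the nonzero root space $\LL_{\alpha}$ must satisfy $\alpha(t)=0$, hence lies in $\ker\alpha$. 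This proves $\mathrm{rad}\,\LL(\alpha)=\ker\alpha=\t(\alpha)$.

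It remains to identify $\LL[\alpha]$. By construction, $\LL[\alpha]$ is a restricted Lie algebra with trivial radical, no nonzero sandwich elements, and $\mathrm{MT}(\LL[\alpha])=1$: the upper bound $\mathrm{MT}(\LL[\alpha])\le 1$ is the general bound on $1$-sections, while $\bar\t:=\t/\ker\alpha$ supplies a nonzero, hence $1$-dimensional, torus. I would first show $\LL[\alpha]$ is simple. Any decomposition $\LL[\alpha]=I_1\oplus I_2$ into nonzero ideals would force each $I_j$ to be radical-free; such a Lie algebra cannot be $p$-nil, since the identity $\ad(x^{[p]^N})=\ad(x)^{p^N}$ combined with Engel's theorem would otherwise make it nilpotent, in contradiction with zero radical. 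Hence each $I_j$ contributes at least one independent toral dimension, giving $\mathrm{MT}(\LL[\alpha])\ge 2$, a contradiction. A more refined socle argument, invoking the absence of sandwich elements, rules out any nonzero proper ideal of $\LL[\alpha]$. With simplicity established, Skryabin's classification \cite{Sk} identifies $\LL[\alpha]$ with $\sl_2$ or $\psl_3$.

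The main obstacle is precisely the reduction to the simple case: in characteristic $3$ the structure of a modular ``semisimple'' Lie algebra is subtler than in characteristic zero, and a proper nonzero ideal need not split off as a direct summand. However, the combination of toral rank exactly $1$ and the absence of nonzero sandwich elements inherited from Theorem~\ref{T1}(2) is strong enough to force any minimal ideal to exhaust $\LL[\alpha]$, which then lies in Skryabin's list.
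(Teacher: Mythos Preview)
Your identification of $\mathrm{rad}\,\LL(\alpha)$ with $\ker\alpha$ matches the paper's and is correct; the extra detail you supply on $\z(\LL(\alpha))\subseteq\t$ via maximality of $\t$ is exactly what the paper means by ``it must be that $\mathrm{rad}\,\LL(\alpha)\subseteq\t$''.

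The gap is in the second half. Your direct-sum argument only shows that $\LL[\alpha]$ is indecomposable as a Lie algebra, not that it is simple, and the ``more refined socle argument'' you allude to is never spelled out. In characteristic~$3$ a semisimple restricted Lie algebra of toral rank~$1$ without sandwich elements can a priori sit strictly between a simple ideal $S$ and $\Der S$; nothing you have written excludes this, and Skryabin's theorem applies only to \emph{simple} Lie algebras. Your final paragraph asserts that toral rank~$1$ together with the absence of sandwiches ``is strong enough to force any minimal ideal to exhaust $\LL[\alpha]$'', but that is precisely the claim requiring proof.

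The paper does \emph{not} prove simplicity of $\LL[\alpha]$ before invoking Skryabin. Instead it argues as follows. Since $\mathrm{MT}(\LL[\alpha])=1$ there is a unique minimal ideal $S$, and the absence of sandwich elements (via the argument of \cite[Lemma~4]{P87a}) forces $S$ to be simple with $\ad S\subseteq\LL[\alpha]\subseteq\Der S$. Skryabin's theorem is applied to $S$, giving $S\cong\sl_2$ or $S\cong\psl_3$. Only \emph{after} this identification does the paper conclude $S=\LL[\alpha]$, and it does so by a second application of Theorem~\ref{T1}(1) rather than by any abstract socle reasoning: the preimage $\widetilde{S}\subseteq\LL(\alpha)$ of $S$ contains $\t(\alpha)$, and since $S$ itself has toral rank~$1$, a maximal torus $\t'$ of $\widetilde{S}$ satisfies $\dim\t'=\dim\t$. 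By Theorem~\ref{T1}(1) (applied to $\LL(\alpha)$, which also has $V$ as a Richardson module) this $\t'$ is self-centralising in $\LL(\alpha)$, and since $[\t',\LL(\alpha)]\subseteq\widetilde{S}$ every $\t'$-root space of $\LL(\alpha)$ already lies in $\widetilde{S}$. Hence $\LL(\alpha)=\t'+\widetilde{S}=\widetilde{S}$.

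So the missing ingredient in your sketch is this torus-counting step, which relies both on the concrete identification of $S$ (to know it has toral rank~$1$) and on the uniformity of Cartan subalgebras granted by Theorem~\ref{T1}(1).
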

\begin{proof} Since $p=3$, Theorem~\ref{T1}(1) yields that $\LL(\alpha)\,=\,\LL_{-\alpha}\oplus \t\oplus\LL_\alpha$. 
Since $\t$ is a maximal torus and ${\rm rad}\,\LL(\alpha)\,=\,\z(\LL(\alpha))$ is toral by Theorem~\ref{T1}(2), it must be that ${\rm rad}\,\LL(\alpha)\subseteq \t$. Since $\LL_\alpha$ is nonzero, $\t(\alpha)\subseteq \z(\LL(\alpha))$ and $\LL[\alpha]$ is semisimple by Theorem~\ref{T1}(2), we have that $\t(\alpha)\,=\,{\rm rad}\,\LL(\alpha)$ and
${\rm MT}(\LL[\alpha])=1$. Furthermore, $\LL_{-\alpha}\ne \{0\}$ as otherwise $\LL(\alpha)$ would be solvable. 

Since ${\rm MT}(\LL[\alpha])=1$ the Lie algebra $\LL[\alpha]$ has a unique minimal ideal, say $S$.
Since $\LL[\alpha]$ has no nonzero sandwich elements by Theorem~\ref{T1}(2), repeating verbatim the argument used in the proof of \cite[Lemma~4]{P87a} one observes that the ideal $S$ is simple and $\LL[\alpha]$ is sandwiched between $\ad S$ and ${\rm Der}\,S$. Since ${\rm MT}(\LL[\alpha])=1$, the simple Lie algebra $S$ has absolute toral rank $1$. Applying \cite[Theorem~6.5]{Sk} one obtains that either $S\cong\sl_2$ or $S\cong \psl_3$. 

In any event, $S$ is a restricted ideal of $\LL[\alpha]$. Let $\widetilde{S}$ be the inverse image of $S$ in $\LL(\alpha)$, a restricted ideal of $\LL(\alpha)$, and let $\t'$ be a maximal torus of 
$\widetilde{S}$. Since $\t'$ contains $\t(\alpha)$ it is straightforward to see that $\dim \t'=\dim\t$. Applying Theorem~\ref{T1}(1) once again yields that $\t'$ is a self-centralising maximal torus of $\LL(\alpha)$. As $[\t',\LL(\alpha)]\subseteq \widetilde{S}$, all root subspaces of $\LL(\alpha)$ with respect to $\t'$ are contained in $\widetilde{S}$. But then $\LL(\alpha)\subseteq  \t'+\widetilde{S}$ forcing $\LL(\alpha)=\widetilde{S}$. This completes the proof.
\end{proof}
\section{The case of irreducible Richardson modules}
\subsection{}\label{3.1} From now on we assume that $p>3$ and our Richardson $\LL$-module $V$ is irreducible. In particular, this means that $\c_{\gl(V)}(\LL)$ consists of scalar endomorphisms of $V$ (by Schur's lemma). Therefore, either $\c_{\gl(V)}(\LL)\,=\,\z(\LL)$ or $\LL$ is centreless and $\c_{\gl(V)}(\LL)\subset R$. By Theorem~\ref{T1}(3) (and our discussion in Subsection~\ref{strong}) there exists a semisimple algebraic $\k$ group $G$ of adjoint type such that
$\LL/\z(\LL)$ identifies with a Lie subalgebra of $\g:=\Lie(G)$ containing $[\g,\g]$.  Since $G$ is a group of adjoint type and $p>3$, it follows from \cite[Lemma~2.7]{BGP}, for example, that the restricted Lie algebra $\g$ is isomorphic to $\Der [\g,\g]$. 
Therefore, we may identify $\bar{\LL}:=\LL/\z(\LL)$ with a restricted ideal of $\g$ containing $[\g,\g]$.  

Let $T$ be a maximal torus of $G$ and denote by $\Phi$ the root system of $G$ with respect to $T$.
Let $\Pi$ be a set of simple roots in $\Phi$ and write $\Phi_+=\Phi_+(\Pi)$ for the set of positive roots with respect to $\Pi$.
All root subspaces $\g_\alpha$ with $\alpha\in\Phi$ are contained in $[\g,\g]$ and
$\bar{\LL}\,=\,\bar{\t}\oplus \sum_{\alpha\in\Phi}\,\g_\alpha$ where $\bar{\t}:=\bar{\LL}\cap \Lie(T)$.
Since $[\g_\alpha,\g_{-\alpha}]\subset \bar{\t}$ for all $\alpha\in\Phi$ and $p>3$, it follows from Seligman's results that $\bar{\t}$ is a self-centralising maximal torus of $\bar{\LL}$ and for every $\gamma\in \Gamma(\bar{\LL},\bar{\t})$ there is a unique $\widehat{\gamma}\in\Phi$ such that $\gamma=({\rm d}_e\widehat{\gamma})\vert_{\,\bar{\t}}$; see \cite[Ch.~II, \S 3]{Sel}. 

Let $\t$ be the inverse image of $\bar{\t}$ in $\LL$, By Theorem~\ref{T1}(1), this is a toral Cartan subalgebra of $\LL$. The above discussion enables us to identify $\Phi$ with  the set of roots $\Gamma(\LL,\t)$, i.e. given $x\in\t$ we write $\alpha(x)$ instead of $({\rm d}_e\alpha)(\bar{x})$ with $\bar{x}=x+\z(\LL)$. We have $\LL\,=\,\t\oplus \sum_{\alpha\in\Phi}\,\LL_\alpha$ and each root subspace $\LL_\alpha=\k e_\alpha$ with respect to $\t$ is $1$-dimensional. We choose root vectors $e_\alpha$ in such a way that $[[e_\alpha,e_{-\alpha}],e_\alpha]=2e_\alpha$ for all $\alpha\in\Phi_+$
and embed the torus $T$ of $G$ into ${\rm Aut}(\LL)$ by setting $t(x)=x$ and $t(e_\alpha)=\alpha(t)e_\alpha$ for all $t\in T$, $x\in\t$ and $\alpha\in\Phi$. 
 \subsection{}\label{3.2}  
 Given $\alpha\in\Phi_+$ we put $h_\alpha:=[e_\alpha,e_{-\alpha}]$. Obviously, $\{e_\alpha,h_\alpha,e_{-\alpha}\}$ is an $\sl_2$-triple in $\LL$ and we write $\mathfrak{s}_\alpha$ for the $\k$-span of $e_{\pm \alpha}$ and $h_\alpha$. It is straightforward to see that $\LL$ admits a natural restricted Lie algebra structure $x\mapsto 
 x^{[p']}$ with respect to which all $h_\alpha$ are toral and all $e_\alpha$ have the property that $e_\alpha^{[p']}=0$. Unfortunately, the restricted Lie algebras $(\LL,[p])$ and
 $(\LL,[p'])$ may be very different if $\dim\z(\LL)=1$. In order to proceed further we have to investigate this problem. 
 
 Suppose that $\z(\LL)$ is spanned by $z={\rm Id}_V$.
 Since $\ad(x^{[p']})=\ad(x^{[p]})=(\ad x)^p$, there exists a linear function $\chi\in\LL^*$ such that 
 $x^{[p']}=x^{[p]}+\chi(x)^p\cdot z$ for all $x\in\LL$. Since $V$ is a restricted module for $(\LL,[p])$, it has $p$-character $\chi$ when regarded as a module over $(\LL,[p'])$.
   \begin{Lemma}\label{L2}
   Suppose $\dim\z(\LL) =1$. Then the linear function $\chi$ vanishes on $[\LL,\LL]$.	
   \end{Lemma}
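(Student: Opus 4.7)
The strategy combines a trace identity with the observation that $[\LL,\LL]$ is closed under the operation $[p']$. I introduce the linear form $\mu\colon\LL\to\k$ defined by $\mu(x)=\operatorname{tr}(\rho(x))$ and note the Frobenius-type identity $\mu(x^{[p]})=\mu(x)^p$; this follows from $\rho(x^{[p]})=\rho(x)^p$ together with the characteristic-$p$ fact $\operatorname{tr}(A^p)=\operatorname{tr}(A)^p$, easily verified in Jordan form. I then check that $[\LL,\LL]$ is $[p']$-stable: its spanning set $\{h_\alpha,e_\alpha:\alpha\in\Phi\}$ consists of elements with $h_\alpha^{[p']}=h_\alpha\in[\LL,\LL]$ and $e_\alpha^{[p']}=0\in[\LL,\LL]$, and Jacobson's formula $(a+b)^{[p']}=a^{[p']}+b^{[p']}+\Lambda_p(a,b)$ (with $\Lambda_p$ a Lie polynomial in $a,b$) propagates this to all of $[\LL,\LL]$.

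With these two ingredients in hand, pick any $x\in[\LL,\LL]$ and apply $\mu$ to the defining relation $x^{[p']}=x^{[p]}+\chi(x)^pz$. The left-hand side vanishes, as $x^{[p']}\in[\LL,\LL]$ and $\mu$ vanishes on commutators; on the right, $\mu(x^{[p]})=\mu(x)^p=0$ while $\chi(x)^p\mu(z)=\chi(x)^p\dim V$. Hence $\chi(x)^p\dim V=0$ in $\k$, and when $p\nmid\dim V$ this immediately gives $\chi(x)=0$ and proves the lemma.

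The main obstacle is the case $p\mid\dim V$, in which the preceding identity is vacuous. In that regime I would argue on the generators of $[\LL,\LL]$ directly. The case of $h_\alpha$ is routine: $\rho(h_\alpha)$ acts on each $\t$-weight space $V_\lambda$ by the $\mathbb{F}_p$-scalar $\lambda(h_\alpha)$, so $\rho(h_\alpha)^p=\rho(h_\alpha)$, and faithfulness of $\rho$ together with $h_\alpha^{[p']}=h_\alpha$ yields $\chi(h_\alpha)=0$. The case of $e_\alpha$ is the hard part: assuming $\chi(e_\alpha)\neq0$, the operator $\rho(e_\alpha)$ is invertible with $\rho(e_\alpha)^p=-\chi(e_\alpha)^p\Id_V$, so $V$ becomes a $U_{\chi'}(\mathfrak{s}_\alpha)$-module for $\mathfrak{s}_\alpha\cong\sl_2$ with $\chi'(e)=-\chi(e_\alpha)\neq0$, forcing every composition factor to have dimension $p$; the resulting structural constraints, combined with the Richardson decomposition $\gl(V)=\LL\oplus R$ and the irreducibility of $V$ as an $\LL$-module, should yield the desired contradiction, though making this last step precise is where the real work lies.
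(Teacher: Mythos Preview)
Your trace argument for the case $p\nmid\dim V$ is correct and is a genuinely different (and shorter) route than the paper's in that regime: the $[p']$-closedness of $[\LL,\LL]$ together with $\operatorname{tr}(\rho(x)^p)=\operatorname{tr}(\rho(x))^p$ does the job cleanly. The proposal is nonetheless incomplete, because the case $p\mid\dim V$ is not settled.

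Your sketch for $h_\alpha$ in that case is circular. You claim $\rho(h_\alpha)$ acts on each $\t$-weight space by an $\mathbb{F}_p$-scalar, hence $\rho(h_\alpha)^p=\rho(h_\alpha)$. But $V$ is restricted only for $[p]$, so its weights satisfy $\lambda(t)^p=\lambda(t^{[p]})$; having $\lambda(h_\alpha)\in\mathbb{F}_p$ is equivalent to $h_\alpha^{[p]}=h_\alpha$, which is exactly the statement $\chi(h_\alpha)=0$ you want. From $h_\alpha^{[p]}=h_\alpha-\chi(h_\alpha)^p z$ one only gets $\lambda(h_\alpha)^p-\lambda(h_\alpha)=-\chi(h_\alpha)^p$, which is no constraint on $\chi(h_\alpha)$. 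And for $e_\alpha$ you yourself concede the argument is unfinished.

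The paper avoids the dichotomy on $\dim V$ and supplies precisely the mechanism your $e_\alpha$ sketch is missing. Assuming $\chi\vert_{\mathfrak{s}_\beta}\ne 0$, the stabiliser $(\mathfrak{s}_\beta)_\chi$ is one-dimensional, so the support variety $\mathcal{V}_{\mathfrak{s}_\beta}(\chi)=\N_p\bigl((\mathfrak{s}_\beta)_\chi\bigr)$ is at most a line; hence one of $e_{\pm\beta}$, say $e_\beta$, lies outside it. Then every $\mathfrak{s}_\beta$-composition factor of $V$ is free over $u_\chi(e_\beta)$, so all Jordan blocks of $e_\beta$ on $\gl(V)\cong V\otimes V^*$ have size $p$. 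Now the Richardson hypothesis enters: $\LL$ is an $(\ad e_\beta)$-stable direct summand of $\gl(V)$, so $(\ad e_\beta)^{p-1}\ne 0$ on $\LL$. This contradicts the root-string bounds in the almost classical algebra $\bar{\LL}$ (strings have length $\le 4<p$), which force $(\ad e_\beta)^{p-1}=0$. That Jordan-block transfer to $\LL$ via the Richardson splitting, followed by the Mills--Seligman bound, is the step your outline lacks; note also that it disposes of $\chi(h_\beta)$ simultaneously, with no separate argument needed.
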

   \begin{proof}
   	Suppose for a contradiction that $\chi$ does not vanish on $\mathfrak{s}_\beta$ for some $\beta\in\Phi_+$. The above discussion enables us to view $V$ as a $U_\chi(\mathfrak{s}_\beta)$-module. (To ease notation we identify $\chi$ with its restriction to $\mathfrak{s}_\beta$.) By the choice of $\beta$, the stabiliser of $\chi$ in $\mathfrak{s}_\beta$ is a {\it proper} Lie subalgebra of $\mathfrak{s}_\beta$, hence $1$-dimensional. Since the restricted Lie algebra $\sl_2$ has the Richardson property,
   	our discussion in Subsection~\ref{supp} shows that $\mathcal{V}_{\mathfrak{s}_\beta}(\chi)\,=\,\N_p((\mathfrak{s_\beta})_\chi)$ is either zero or a single line depending on $\chi$. 
   	
   	We may thus assume without loss of generality that $e_\beta\not\in  \mathcal{V}_{\mathfrak{s}_\beta}(\chi)$. Then all composition factors of the $u_\chi(\mathfrak{s}_\beta)$-module $V$ are free $u_\chi(e_\beta)$-modules, implying that all Jordan blocks of $e_\beta$ on $\gl(V)\cong V\otimes V^*$ have size $p$. Since $\LL$ is a direct summand of $\gl(V)$, all Jordan blocks of $\ad e_\beta\in\gl(\LL)$ must have size $p$ as well. In particular, $(\ad e_\beta)^{p-1}\ne 0$. 
   	
   	Now, if $\gamma\in\Phi\setminus\{\pm \alpha\}$ then $(\ad e_\beta)^{p-1}(e_\gamma)=0$ by the Mills--Seligman axioms 
   	(see \cite[Theorem~2.5]{BGP}, for example). Since $(\ad e_\beta)^3(\mathfrak{s}_\beta)=0=(\ad e_\beta)^2(\t)$ and $p>3$ we get $(\ad e_\beta)^{p-1}=0$. This contradiction shows that $\chi$ vanishes on all subalgebras $\mathfrak{s}_\beta$ with $\beta\in \Phi_+$. As such subalgebras span $[\LL,\LL]$, the result follows.
   	\end{proof}
   \subsection{}\label{3.3} It follows from Lemma~\ref{L2} that $e_{\pm \alpha}^{[p]}=0$ and $h_\alpha^{[p]}=h_\alpha$ for all $\alpha\in\Phi_+$. This must hold in the case where 
   $\LL$ is centreless as well, since $\LL$ then admits a unique $[p]$-structure. Now one can check 
   directly the action of the maximal torus $T$ of $G$ on $\LL$ described in Subsection~\ref{3.1} respects the $[p]$-structure of $\LL$, i.e. has the property that $t(x^{[p]})=t(x)^{[p]}$ for all $t\in T$ and $x\in \LL$ (one has to keep in mind here that $\t$ is restricted and $T$ acts trivially on $\t$). As a consequence, $T$ embeds into the automorphism group of the restricted enveloping algebra $U_0(\LL)$.
   \begin{Lemma}\label{L3}
 The module $V$ admits a rational $T$-action compatible with the action of $T$ on $\LL$.  
\end{Lemma}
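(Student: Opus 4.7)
My plan is to exploit the fact, established in the subsection preceding the lemma, that the rational $T$-action on $\LL$ preserves the $[p]$-structure and hence lifts to a rational action of $T$ on the restricted enveloping algebra $U_0(\LL)$ by $\k$-algebra automorphisms $\phi_t$. The strategy is in three steps: show that every simple $U_0(\LL)$-module is fixed up to isomorphism by this twisting; assemble the resulting Schur isomorphisms into a central extension of $T$ by $\mathbb{G}_m$ in the algebraic-group category; and split that extension using that split tori admit no nontrivial algebraic central extensions by $\mathbb{G}_m$.

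\emph{Step 1 (invariance of the isomorphism class).} Since $U_0(\LL)$ is finite-dimensional, it has only finitely many isomorphism classes of simple modules: they correspond to the matrix factors in the decomposition $U_0(\LL)/\mathrm{rad}\,U_0(\LL)\cong\prod_{i}M_{n_i}(\k)$. The rational $T$-action preserves the Jacobson radical and hence permutes these finitely many matrix factors algebraically; connectedness of $T$ forces this permutation to be trivial. Inside each factor every algebra automorphism is inner, so the corresponding simple module is fixed up to isomorphism. Applied to $V$, this yields $V^{\phi_t}\cong V$ as restricted $\LL$-modules for every $t\in T$.

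\emph{Step 2 (a central extension).} By Schur's lemma, for each $t\in T$ there exists $\psi_t\in \GL(V)$, unique up to a scalar, with $\psi_t\rho(x)\psi_t^{-1}=\rho(\phi_t(x))$ for all $x\in\LL$. The closed subgroup
\[
\widetilde T\;:=\;\bigl\{(g,t)\in\GL(V)\times T\,:\,g\rho(x)g^{-1}=\rho(\phi_t(x))\ \text{for all}\ x\in\LL\bigr\}
\]
of $\GL(V)\times T$ projects surjectively onto $T$ (by Step~1) with kernel the central subgroup $\k^\times\subseteq\GL(V)$ of scalar matrices, yielding a central extension $1\to\mathbb{G}_m\to\widetilde T\to T\to 1$ of algebraic groups. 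Its commutator descends to an algebraic bimultiplicative pairing $T\times T\to\mathbb{G}_m$; fixing one argument gives an algebraic morphism from the connected $T$ to the discrete lattice $X^*(T)$, so the pairing is constantly trivial, and $\widetilde T$ is commutative. Commutative extensions of a split torus by $\mathbb{G}_m$ split ($\mathrm{Ext}^1(T,\mathbb{G}_m)=0$), so there is an algebraic-group section $\sigma\colon T\to\widetilde T$.

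\emph{Step 3 (conclusion).} The composition of $\sigma$ with the projection $\widetilde T\hookrightarrow\GL(V)\times T\to\GL(V)$ is a morphism of algebraic groups $\widetilde\rho\colon T\to\GL(V)$, i.e., a rational $T$-action on $V$. The defining condition of $\widetilde T$ now reads $\widetilde\rho(t)\rho(x)\widetilde\rho(t)^{-1}=\rho(\phi_t(x))=\rho(t\cdot x)$ for every $t\in T$ and $x\in\LL$, which is precisely the required compatibility between the $T$-actions on $V$ and on $\LL$.

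The most delicate point is Step~1: one must turn the informal slogan \emph{``an algebraic action of a connected group on a finite set is trivial''} into an honest algebraic argument. The cleanest route is to observe that the $T$-action descends to the semisimple quotient $U_0(\LL)/\mathrm{rad}$ and hence to its set of primitive central idempotents, a finite subset of $U_0(\LL)$ cut out by rational equations in $t\in T$; connectedness of $T$ then pins down each matrix factor, and the inner-automorphism argument pins down the simple module up to isomorphism. Once Step~1 is in place, Steps~2 and~3 are fairly standard algebraic-group lore.
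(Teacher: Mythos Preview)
Your argument is correct. The paper's own proof is a one-line citation: it invokes the gradability theorem for simple modules over graded Artin algebras due to Gordon--Green \cite{GG82}, together with Jantzen's observation \cite{Jan00} that this applies to simple $U_0(\LL)$-modules once $T$ acts on $U_0(\LL)$ by algebra automorphisms. That black-box result delivers the $T$-weight decomposition of $V$ directly.

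Your route is genuinely different in packaging, though the opening move is the same: both approaches rest on the fact that $T$, being connected, fixes every isomorphism class of simple $U_0(\LL)$-module (your Step~1 makes this explicit via the primitive central idempotents, which is exactly how Gordon--Green proceed). Where you diverge is in how you upgrade ``fixed up to isomorphism'' to an honest action: rather than citing the graded-algebra machinery, you build the closed subgroup $\widetilde T\subset\GL(V)\times T$, identify it as a central $\mathbb G_m$-extension of $T$, kill the commutator pairing by rigidity of characters, and then split the resulting extension of tori. This is more hands-on and uses only standard structure theory of commutative algebraic groups (a connected commutative extension of a torus by $\mathbb G_m$ has no unipotent part, hence is a torus, hence splits on character lattices). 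The paper's approach is shorter given the literature; yours is self-contained and makes the mechanism transparent, which is useful if one wants to adapt the argument to other acting groups or to settings where the Gordon--Green theorem is not immediately available. One cosmetic point: your phrase ``algebraic morphism from the connected $T$ to the discrete lattice $X^*(T)$'' is a slight abuse, but the intended argument (the locus where the commutator equals a fixed character is Zariski-closed, and $T$ is irreducible) is clear and correct.
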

  \begin{proof}
  By \cite[Prop.~3.5]{GG82} and \cite[Corollary~2]{Jan00}, the simple $U_0(\LL)$-module $V$ is gradable, i.e. decomposes into a direct sum of $T$-weight spaces $V=\bigoplus _{\lambda\in X(T)} V_\mu$ in such a way that each weight space $V_\mu$ is $\t$-stable and $t(e_\alpha . v)= (\mu+\alpha)(t)v$ for all $t\in T$,  $\alpha\in\Phi$ and $v\in V_\mu$.
  \end{proof}
Since $G$ is a group of adjoint type the torus $T$ acts faithfully on $[\g,\g]$. As $\LL$ embeds into $\gl(V)$, this implies that $T$ acts faithfully on $V$. We may thus identify $T$ with a connected subgroup of $\GL(V)$. Since $T\subset N_{\GL(V)}(\LL)$ the Lie algebra $\Lie(T)$ normalises $\LL$. 

Recall from Subsection~\ref{3.1} that $\bar{\LL}=\LL/\z(\LL)$ is sandwiched between $\g$ and $[\g,\g]$.
\begin{Proposition}\label{P1}
We have the equality $\bar{\LL}=\g$.	
	\end{Proposition}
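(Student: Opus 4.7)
The plan is to show $\bar{\t} = \Lie(T)$ as subspaces of $\g$, from which $\bar{\LL} = \g$ is immediate: both Lie algebras carry the root space decomposition $\bar{\t} \oplus \sum_{\alpha \in \Phi} \g_\alpha$, respectively $\Lie(T) \oplus \sum_{\alpha \in \Phi} \g_\alpha$. Since $\bar{\t} = \bar{\LL} \cap \Lie(T) \subseteq \Lie(T)$ and $r := \dim \Lie(T)$ is the rank of $G$, it suffices to exhibit an $r$-dimensional subspace inside $\bar{\t}$. My plan is to extract it from the copy of $\Lie(T)$ sitting in $\gl(V)$ via Lemma~\ref{L3} and push it through the Richardson complement into $\bar{\t}$.

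First, I would refine the Richardson decomposition using the adjoint action of $\t$: since $\gl(V) = \LL \oplus R$ is $\t$-stable, the zero weight space gives $\c_{\gl(V)}(\t) = \gl(V)_0 = \t \oplus R_0$ with $R_0 := R \cap \gl(V)_0$. Because $T$ fixes $\t$ pointwise, $\Lie(T) \subseteq \gl(V)_0$, and every $t \in \Lie(T)$ decomposes uniquely as $t = t_\t + t_{R_0}$ with $t_\t \in \t$ and $t_{R_0} \in R_0$. For each $\alpha \in \Phi$ the bracket $[t, e_\alpha] = \alpha(t) e_\alpha$ lies in $\LL_\alpha$, whereas $[t_{R_0}, e_\alpha] \in R_\alpha$ by $[\LL, R] \subseteq R$; comparing components in $\gl(V)_\alpha = \LL_\alpha \oplus R_\alpha$ forces $[t_{R_0}, e_\alpha] = 0$. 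Together with $[t_{R_0}, \t] = 0$ (since $R_0 \subseteq \c_{\gl(V)}(\t)$), this shows $t_{R_0}$ centralises $\LL$, and Schur's lemma (irreducibility of $V$) then gives $t_{R_0} \in \k \cdot \Id_V$.

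The second step is to prove $\Lie(T) \cap \k \cdot \Id_V = 0$. Because $V$ is irreducible under $\LL$ and the root operator $e_\alpha$ carries $V_\mu$ into $V_{\mu + \alpha}$ (by Lemma~\ref{L3}), the nontriviality of each $e_\alpha$ on $V$ forces every root $\alpha \in \Phi$ to occur as a difference of two $T$-weights of $V$. Adjointness of $G$ gives $X^*(T) = \Z \Phi$, so the group generated by such weight differences is all of $X^*(T)$. If $t \in \Lie(T)$ equalled $c \cdot \Id_V$, then $\mu(t) = c$ for every $T$-weight $\mu$ of $V$, whence $(\mu - \mu')(t) = 0$ for all pairs and, by the generation statement, $\chi(t) = 0$ for every $\chi \in X^*(T)$. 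Under the canonical identification $\Lie(T) \cong \Lie(T)_G$ induced by the faithful $T$-action on $V$, this forces $t = 0$ and therefore $c = 0$.

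Putting the pieces together, the projection $\Lie(T) \to \t$, $t \mapsto t_\t$, has kernel $\Lie(T) \cap R_0 \subseteq \Lie(T) \cap \k \cdot \Id_V = 0$, so $\Lie(T)$ embeds into $\t$ with image of dimension $r$. Composing with the quotient $\t \twoheadrightarrow \bar{\t}$ preserves this dimension, since $\Lie(T) \cap \z(\LL) \subseteq \Lie(T) \cap \k \cdot \Id_V = 0$ in either of the two cases of Subsection~\ref{3.1}. Hence $\dim \bar{\t} \ge r$, forcing $\bar{\t} = \Lie(T)$ and $\bar{\LL} = \g$. The main obstacle is the second step: both the adjointness of $G$ (which yields $X^*(T) = \Z \Phi$) and the $\LL$-irreducibility of $V$ (which links $T$-weight spaces through root operators) are essential to eliminate scalar matrices from $\Lie(T)$; without this elimination a rogue scalar would persist, the image of $\Lie(T)$ in $\bar{\t}$ would drop by one dimension, and the final count would fail.
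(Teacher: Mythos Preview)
Your proof is correct and follows the same strategy as the paper's: project $\Lie(T)\subset\n_{\gl(V)}(\LL)$ onto $\LL$ along the Richardson complement $R$, observe that the $R$-component centralises $\LL$ (hence is scalar by Schur), and then use the adjointness of $G$ to force injectivity and finish by a dimension count. The paper is a bit more direct---it simply says $\Lie(T)$ acts faithfully on $[\g,\g]$ (so any $t\in\Lie(T)$ centralising $\LL$ is zero) and skips your detour through the $T$-weights of $V$; also, in your final step the kernel of $\Lie(T)\to\t\to\bar\t$ is $\{t:t_\t\in\z(\LL)\}$ rather than $\Lie(T)\cap\z(\LL)$, but since $t_\t\in\z(\LL)\subseteq\k\,\Id_V$ makes $t=t_\t+t_{R_0}$ scalar (hence zero by your Step~2), the conclusion is unaffected.
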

\begin{proof}
	Let $\n(\LL)$ denote the normaliser of $\LL$ in $\gl(V)$. Given $x\in \n(\LL)$ we write $x=x'+x''$ with
	$x'\in \LL$ and $x''\in R$. As $[x,\LL]\subseteq \LL$ it must be that
	$[x'',\LL]=0$. Hence $\n(\LL)\,=\,\LL\oplus (R\cap\c_{\gl(V)}(\LL))$. Since $\Lie(T)\subset 
	\n(\LL)$ acts faithfully $[\g,\g]$ (and hence on $\LL$) the restriction of the first projection 
	$\n(\LL)\to \LL$ to $\Lie(T)$ is injective. Since $\g=\Lie(T)+\sum_{\alpha\in\Phi}\,\bar{\LL}_\alpha$ this implies that $\dim\,\g=\dim\,\bar{\LL}$. As $\bar{\LL}\subseteq \g$, the claim follows.
\end{proof}
\subsection{}\label{3.4} Suppose $H$ is a simple algebraic $\k$-group of adjoint type and $p>3$. Then $\Lie(H)$ is a perfect Lie algebra whenever $H$ has type other than ${\rm A}_{kp-1}$, and  $[\Lie(H),\Lie(H)]$ has codimension $1$ in $\Lie(H)$ if $H\cong{\rm PGL}_{kp}$. Indeed, in the latter case $\Lie(H)\,\cong\,\mathfrak{pgl}_{kp}$ and $[\Lie(H),\Lie(H)]\,\cong\,\mathfrak{psl}_{kp}$. Since our algebraic group $G$ is isomorphic to a a direct product of simple algebraic groups of adjoint type, $\dim(\g/[\g,\g])$ equals the number of simple components of $G$ having types ${\rm A}_{kp-1}$ with $k\in\Z_{>0}$.
\begin{Lemma}\label{L4}
The derived subalgebra of $\g$ has codimension $\le 1$ in $\g$.	
\end{Lemma}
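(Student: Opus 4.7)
The strategy is to compute the dimension of the $\LL$-coinvariants $\gl(V)_\LL := \gl(V)/[\LL, \gl(V)]$ of the adjoint $\LL$-module $\gl(V)$ in two different ways and equate them. Recall from Subsection~\ref{3.4} that $\dim\,\g/[\g,\g]$ equals the number of simple components of $G$ of type $\mathrm{A}_{kp-1}$, so the assertion is equivalent to $\dim\,\g/[\g,\g]\le 1$.

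First, I would compute $\dim\,\gl(V)_\LL$ abstractly. There is a canonical $\LL$-equivariant isomorphism $\gl(V)\cong V\otimes V^*$. By Schur's lemma (applicable since $V$ is absolutely irreducible over the algebraically closed field $\k$), $\End_\LL(V)=\k\cdot\mathrm{Id}_V$, which translates into $\gl(V)^\LL=\c_{\gl(V)}(\LL)=\k\cdot\mathrm{Id}_V$ being $1$-dimensional. The standard duality $\dim M_\LL=\dim (M^*)^\LL$, valid for any finite-dimensional $\LL$-module $M$ (the two spaces being naturally dual via $(M^*)^\LL\cong\mathrm{Ann}([\LL,M])\subset M^*$), applied to $M=V\otimes V^*$, together with the $\LL$-module isomorphism $(V\otimes V^*)^*=V^*\otimes V\cong V\otimes V^*$ given by the swap, yields $\dim\,\gl(V)_\LL=1$.

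Second, I would exploit the Richardson decomposition $\gl(V)=\LL\oplus R$ with $[\LL,R]\subseteq R$. Since $[\LL,\LL]\subseteq\LL$ and $[\LL,R]\subseteq R$, the subspace $[\LL,\gl(V)]=[\LL,\LL]+[\LL,R]$ is a direct sum (because $\LL\cap R=0$) aligned with the decomposition $\gl(V)=\LL\oplus R$. Consequently $\gl(V)_\LL\cong \LL/[\LL,\LL]\,\oplus\, R/[\LL,R]$. Equating the two computations yields $\dim\,\LL/[\LL,\LL]+\dim\,R/[\LL,R]=1$, whence $\dim\,\LL/[\LL,\LL]\le 1$. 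Since the surjection $\LL\twoheadrightarrow\LL/\z(\LL)=\g$ induces a surjection $\LL/[\LL,\LL]\twoheadrightarrow\g/[\g,\g]$, we conclude $\dim\,\g/[\g,\g]\le 1$, as required.

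The argument is essentially formal once one has the Richardson decomposition; there is no serious obstacle. The key conceptual point is that the Richardson hypothesis forces the adjoint action of $\LL$ on $\gl(V)$ to decompose cleanly across the splitting $\gl(V)=\LL\oplus R$, which is exactly what is needed in order to transfer the Schur-type identity $\dim\,(V\otimes V^*)_\LL=1$ into a bound on the abelianization of $\LL$ itself, and hence of $\g$.
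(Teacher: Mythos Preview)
Your proof is correct and follows essentially the same approach as the paper: both use Schur's lemma to see that $\gl(V)^\LL$ is $1$-dimensional, exploit the self-duality of $\gl(V)$ as an $\LL$-module, and combine this with the Richardson splitting to bound $\dim\,\LL/[\LL,\LL]$ (and hence $\dim\,\g/[\g,\g]$) by~$1$. The only cosmetic difference is that you work with coinvariants of $\gl(V)$ and split them along $\LL\oplus R$, whereas the paper dualises the splitting to realise $\LL^*$ as a direct summand of $\gl(V)$ and then bounds $\dim(\LL^*)^\LL=\dim(\LL/[\LL,\LL])^*$ by $\dim\,\gl(V)^\LL$; these are formally dual versions of the same argument.
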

\begin{proof}
	In view of Proposition~\ref{P1}, in order to prove the lemma it suffices to show that $\LL$ has codimension $\le 1$ in $\LL$.
	Since $\LL$ is a direct summand of $\gl(V)$, a self-dual $\LL$-module, the coadjoint $\LL$-module $\LL^*$ must be a direct summand of $\gl(V)$ as well. Since the trivial $\LL$-submodule $(\LL^*)^\LL$ of $\LL^*$ identifies canonically with the dual space $(\LL/[\LL,\LL])^*$, we have that  
	$$\dim(\LL/[\LL,\LL])=\dim(\LL^*)^\LL\le \dim\c_{\gl(V)}(\LL)=1.$$
	If follows that either $\g$ is perfect or $[\g,\g]$ has codimension $1$ in $\g$. 
\end{proof}
	Lemma~\ref{L4} shows that the group $G$ cannot have more that one component of type ${\rm A}_{kp-1}$, and its proof implies that $[\LL,\LL]$ has codimension $\le 1$ in $\LL$.
	\begin{Lemma}\label{L5}
		If $G$ has a component of type ${\rm A}_{kp-1}$, then $\z(\LL)$ is a $1$-dimensional subalgebra of $[\LL,\LL]$. In other words, $[\LL,\LL]$ is a non-split central extension of  $[\g,\g]$.
		\end{Lemma}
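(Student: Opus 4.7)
The plan is to establish the inclusion $\z(\LL)\subseteq[\LL,\LL]$ from a dimension count and then prove $\dim\z(\LL)=1$ by contradiction. For the inclusion, the quotient $\LL\twoheadrightarrow\g$ has kernel $\z(\LL)$ and carries $[\LL,\LL]$ onto $[\g,\g]$, so $\dim[\LL,\LL]=\dim[\g,\g]+\dim(\z(\LL)\cap[\LL,\LL])$; combining this with $\dim\LL=\dim\g+\dim\z(\LL)$ yields
$$\dim\LL-\dim[\LL,\LL]=(\dim\g-\dim[\g,\g])+(\dim\z(\LL)-\dim(\z(\LL)\cap[\LL,\LL])).$$
The left-hand side is $\le 1$ by Lemma~\ref{L4}, while $\dim\g-\dim[\g,\g]=1$ since, by Lemma~\ref{L4}, $G$ has exactly one component of type $\mathrm{A}_{kp-1}$. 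Nonnegativity of the remaining term forces $\z(\LL)\subseteq[\LL,\LL]$ and also $\dim\LL-\dim[\LL,\LL]=1$.

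To rule out $\z(\LL)=0$, suppose for contradiction that it holds, so $\LL\cong\g$ as restricted Lie algebras. The non-degenerate $\LL$-invariant trace form $(X,Y)\mapsto\tr(XY)$ makes $\gl(V)$ a self-dual $\LL$-module, whence $\dim\gl(V)^\LL=\dim\gl(V)_\LL$. The left-hand side equals $1$ by Schur's lemma (spanned by $\Id_V$). From $\gl(V)=\LL\oplus R$ we obtain $\gl(V)_\LL=\LL/[\LL,\LL]\oplus R/[\LL,R]$, whose first summand is one-dimensional by the previous paragraph, forcing $R=[\LL,R]$. The $\LL$-invariant linear functional $\tr_V\colon\gl(V)\to\k$ therefore restricts to zero on $R$; since $\Id_V\in R$ (as $\Id_V\notin\LL$ when $\z(\LL)=0$), we conclude $\dim V=\tr_V(\Id_V)=0$ in $\k$, i.e., $p\mid\dim V$.

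Decompose $\g=\mathfrak{pgl}_{kp}\oplus\g_2$ with $\g_2$ adjoint and centreless, and correspondingly $V\cong V_1\otimes V_2$. Since the $\g_2$-invariants of $\gl(V)\cong\gl(V_1)\otimes\gl(V_2)$ coincide with $\gl(V_1)$, taking $\g_2$-invariants of the Richardson splitting yields a $\mathfrak{pgl}_{kp}$-stable decomposition $\gl(V_1)=\mathfrak{pgl}_{kp}\oplus R'$; this is a Richardson decomposition for $\mathfrak{pgl}_{kp}$ acting on the faithful irreducible restricted module $V_1$. The lift of $V_1$ to $\gl_{kp}$ is a simple module $L(\lambda_0)$ with $\sum_i\lambda_{0,i}\equiv 0\pmod p$ (since $\Id_{kp}$ acts as $0$), and the same trace argument applied to $\mathfrak{pgl}_{kp}$ on $V_1$ now gives $p\mid\dim V_1$.

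The hardest step is to derive the final contradiction from this configuration. The key structural fact is that $\mathfrak{pgl}_{kp}$, viewed as a module over itself, is indecomposable with socle $\psl_{kp}$ and trivial top, while its dual $\mathfrak{pgl}_{kp}^*$ is indecomposable with socle $\k$ and top $\psl_{kp}$; in particular $\mathfrak{pgl}_{kp}\not\cong\mathfrak{pgl}_{kp}^*$. Krull--Schmidt applied to the self-dual module $\gl(V_1)=\mathfrak{pgl}_{kp}\oplus R'$ forces $\mathfrak{pgl}_{kp}^*$ to appear as an indecomposable direct summand of $R'$; a careful analysis combining this with the restricted structure from Subsection~\ref{3.3}, the divisibility $p\mid\dim V_1$, and the action of a toral complement $\bar E_{11}\in\mathfrak{pgl}_{kp}\setminus\psl_{kp}$ on $V_1$ should complete the contradiction. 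Once $\dim\z(\LL)=1$ is established, the ``in other words'' clause is automatic: the central extension $\sl_{kp}\twoheadrightarrow\psl_{kp}$ does not split because $\sl_{kp}$ is perfect and hence admits no codimension-one Lie subalgebra complementary to its centre $\k\cdot\Id_{kp}$.
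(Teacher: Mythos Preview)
Your first paragraph is correct and cleaner than the paper's treatment: the dimension count really does force $\z(\LL)\subseteq[\LL,\LL]$ and $\dim\LL/[\LL,\LL]=1$ once $G$ has an $\mathrm{A}_{kp-1}$ component.

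The genuine gap is in ruling out $\z(\LL)=0$. Your second and third paragraphs are valid as far as they go (the self-duality/coinvariants argument giving $R=[\LL,R]$ and hence $p\mid\dim V$, and the passage to a Richardson decomposition for $\mathfrak{pgl}_{kp}$ on $V_1$ via $\g_2$-invariants, are both fine). But you then explicitly stop: you write that ``the hardest step is to derive the final contradiction'' and that a Krull--Schmidt analysis ``should complete the contradiction''. That is not a proof. Moreover, the sketch you give is not obviously completable: knowing that both $\mathfrak{pgl}_{kp}$ and $\mathfrak{pgl}_{kp}^*$ occur as indecomposable summands of the self-dual module $\gl(V_1)$, together with $p\mid\dim V_1$, does not by itself produce a contradiction, and you have not said which additional structural input (the toral element $\bar E_{11}$, the weight combinatorics, \dots) would close the argument or how.

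By contrast, the paper avoids all of this. Its argument is purely module-theoretic in the adjoint representation: assuming $\z(\LL)\cap[\LL,\LL]=\{0\}$, one gets $\LL=\z(\LL)\oplus[\LL,\LL]$, so the canonical projection $\LL\to\bar\LL=\g$ restricts to an isomorphism $[\LL,\LL]\xrightarrow{\sim}\g$ of $\ad\g$-modules; on the other hand $[\LL,\LL]$ maps isomorphically onto $[\g,\g]=\g_1\oplus\psl_{kp}$, which is a completely reducible $\ad\g$-module. Hence $\g$ itself would be completely reducible, contradicting the fact that $\mathfrak{pgl}_{kp}$ is a non-semisimple indecomposable direct summand of $\g$. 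No trace forms, no tensor factorisation $V\cong V_1\otimes V_2$, no Krull--Schmidt, and no divisibility statements are needed. You should replace your incomplete paragraphs by this short $\ad\g$-module argument.
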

	\begin{proof} Since $G$ has a component of type ${\rm A}_{kp-1}$ it follows from Lemma~\ref{L4} that $G\cong G_1\times G_2$ where $G_1$ has no components of type ${\rm A}_{kp-1}$ and $G_2\cong{\rm PGL}_{kp}$. Setting $\g_i:=\Lie(G_i)$ for $i=1,2$ we get two commuting restricted ideals of $\g$ such that $\g=\g_1\oplus\g_2$. Furthermore, $[\g_1,\g_1]=\g_1$ is centreless and $\g_2\cong\mathfrak{pgl}_{kp}$.
		If $\z(\LL)\cap [\LL,\LL]=\{0\}$ then $\LL=\z(\LL)\oplus [\LL,\LL]$. In this case, Proposition~\ref{P1} entails that $$\g=\bar{\LL}\,\cong\,[\LL,\LL]\,\cong\,\g_1\oplus[\g_2,\g_2]\,\cong\,\g_1\oplus\psl_{kp}$$
		is a completely reducible $\ad \g$-module. Since $\mathfrak{pgl}_{kp}$ is a direct summand
		of $\g$, this is impossible. By contradiction, the result follows.
		\end{proof}
	\subsection{}\label{3.5}  If $G\cong G_1\times G_2$, where the $G_i$'s are as in the proof of Lemma~\ref{L5}, we let $\widetilde{G} = \widetilde{G}_1\times \widetilde{G}_2$ be the reductive $\k$-group such that $\widetilde{G}_1$ is a semisimple, simply connected cover of $G_1$ and $\widetilde{G}_2=\GL_{kp}$.
	If $G$ has no components of type ${\rm A}_{kp-1}$ and $\LL$ is centreless, we set $\widetilde{G}:=\widetilde{G}_1$. Finally, if $G$ has no components of type ${\rm A}_{kp-1}$ and $\dim\z(\LL)=1$ , we set $\widetilde{G}:=T_0\times \widetilde{G}_1$ where $T_0$ is a $1$-dimensional central torus of $\widetilde{G}$. In all cases, the derived subgroup of $\widetilde{G}$ is semisimple and simply connected. Since $p>3$, the Lie algebra 
	$\g_1\cong \Lie(\widetilde{G}_1)$ is a direct sum of simple ideals. By \cite[Lemma~2.2]{Pr97}, the Lie algebra $\widetilde{\g}:=\Lie(\widetilde{G})$ admits a non-degenerate symmetric $(\Ad \widetilde{G})$-invariant bilinear form, 
	say $b\colon\, \widetilde{\g}\times \widetilde{\g}\to \k$, and it follows from
	\cite[Theorem~A]{Gar} that $b$ can be chosen to be a trace form (associated with a rational representation of $\widetilde{G}$) provided that $p$ is a good prime for
	$\widetilde{G}_1$.
	\begin{Proposition}\label{P2} 
	In all cases, the restricted Lie algebra $\LL$ is isomorphic to $\widetilde{\g}$.
	\end{Proposition}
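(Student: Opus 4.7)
The plan is to construct an explicit isomorphism $\LL\xrightarrow{\sim}\widetilde{\g}$ of restricted Lie algebras by identifying the central extensions
$0\to\z(\LL)\to\LL\to\g\to 0$ and $0\to\z(\widetilde{\g})\to\widetilde{\g}\to\g\to 0$, where the identification on the right uses Proposition~\ref{P1}. Lemmas~\ref{L4} and \ref{L5} split the argument into three cases: (i) $G$ has no component of type ${\rm A}_{kp-1}$ and $\LL$ is centreless; (ii) $G$ has no component of type ${\rm A}_{kp-1}$ and $\dim\z(\LL)=1$; (iii) $G\cong G_1\times{\rm PGL}_{kp}$ and, by Lemma~\ref{L5}, $\dim\z(\LL)=1$ with $[\LL,\LL]$ a non-split central extension of $[\g,\g]$.

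The key input is the vanishing $H^2(\g_s,\k)=0$ for every simple summand $\g_s$ of $\g$ of type other than ${\rm A}_{kp-1}$ (classical in good characteristic $>3$). In case (i), every fundamental group of a simple factor of $G$ has order coprime to $p$, so $\Lie(\widetilde{G}_1)\cong\g$ and $\LL=\g=\widetilde{\g}$ on the nose. In case (ii), cohomological vanishing splits the central extension as Lie algebras, giving $\LL\cong\z(\LL)\oplus\g\cong T_0\oplus\Lie(\widetilde{G}_1)=\widetilde{\g}$. In case (iii), the $\g_1$-summand of $[\g,\g]$ still splits off inside $[\LL,\LL]$, so the non-triviality supplied by Lemma~\ref{L5} must concentrate in the $\psl_{kp}$-summand; since $H^2(\psl_{kp},\k)$ is $1$-dimensional with unique non-trivial extension $\sl_{kp}\to\psl_{kp}$, we conclude $[\LL,\LL]\cong\g_1\oplus\sl_{kp}$, and adjoining any preimage of $\bar I_{kp}\in\mathfrak{pgl}_{kp}$ produces $\LL\cong\g_1\oplus\gl_{kp}\cong\widetilde{\g}$ as abstract Lie algebras.

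Upgrading to a \emph{restricted} isomorphism is mostly bookkeeping: the toral Cartan $\t\subset\LL$ supplied by Theorem~\ref{T1}(1) maps to the toral Cartan $\Lie(\widetilde{T})\subset\widetilde{\g}$; the root vectors satisfy $e_{\pm\alpha}^{[p]}=0$ and $h_\alpha^{[p]}=h_\alpha$ on both sides by Subsection~\ref{3.3}; and $\z(\LL)$ is toral by Theorem~\ref{T1}(2) applied with $\t=\{0\}$, matching the central toral line of $\widetilde{\g}$. In case (iii), the abelian generator adjoined to $[\LL,\LL]$ may be replaced by its semisimple part modulo $\z(\LL)$, producing a toral element that corresponds to a preimage of $I_{kp}\in\gl_{kp}$.

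The hardest step is case (iii): one must pin down that the non-split extension of $\psl_{kp}$ appearing inside $[\LL,\LL]$ is precisely $\sl_{kp}$ rather than a different class in $H^2(\psl_{kp},\k)$. I would settle this by a Chevalley-style comparison: using the $\sl_2$-triples $\mathfrak{s}_\alpha$ from Subsection~\ref{3.2} and the $T$-equivariance of the decomposition of $\LL$, one pulls back Chevalley generators $\{e_\alpha,h_\alpha\}$ from $\widetilde{\g}$ to $[\LL,\LL]$, verifies the Mills--Seligman relations, and reads off that the resulting central extension coincides with $\sl_{kp}$ on the $\PGL_{kp}$-side.
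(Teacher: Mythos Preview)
Your strategy is the paper's: split into the three cases, identify the relevant central extension via second cohomology, and upgrade to a restricted isomorphism using the facts of Subsection~\ref{3.3}. The only methodological difference is that you quote $H^2(\g_s,\k)=0$ and $\dim H^2(\psl_{kp},\k)=1$ as known, whereas the paper derives the equivalent statement $H^2([\g,\g],\k)\cong\g/[\g,\g]$ directly from the non-degenerate invariant form $b$ on $\widetilde{\g}$: any $2$-cocycle on $[\g,\g]$ corresponds, via $\bar b$, to a derivation, hence to some $h\in\g\cong\Der[\g,\g]$, and is a coboundary exactly when $h\in[\g,\g]$.

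There is, however, a genuine slip in case~(iii). Because $p\mid kp$, the identity $I_{kp}$ has trace $0$, so $I_{kp}\in\sl_{kp}$ and its image $\bar I_{kp}\in\mathfrak{pgl}_{kp}$ is \emph{zero}; ``adjoining a preimage of $\bar I_{kp}$'' therefore recovers only $\z(\LL)\subseteq[\LL,\LL]$ and cannot enlarge $[\LL,\LL]$ to $\LL$. Likewise, under the identification $[\LL,\LL]\cong\g_1\oplus\sl_{kp}$, the centre $\z(\LL)$ already corresponds to $\k I_{kp}$, so your toral generator in the restricted-upgrade paragraph cannot ``correspond to a preimage of $I_{kp}\in\gl_{kp}$'' either. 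What you need is a preimage $\hat h\in\LL$ of some $h\in\mathfrak{pgl}_{kp}\setminus\psl_{kp}$; the paper takes $h\in\Lie(T)$ with $\alpha_1(h)=1$, $\alpha_i(h)=0$ for $i>1$, and chooses $\hat h$ toral. One then checks that $\ad\hat h$ acts on $[\LL,\LL]\cong\g_1\oplus\sl_{kp}$ as $\ad(0,x)$ for any lift $x\in\gl_{kp}\setminus\sl_{kp}$ of $h$ (the lift of $\ad h$ to a derivation of $\sl_{kp}$ is unique because $\sl_{kp}$ is perfect), and sending $\hat h\mapsto(0,x)$ gives $\LL\cong\g_1\oplus\gl_{kp}$. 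Finally, your ``hardest step'' paragraph is superfluous: once $\dim H^2(\psl_{kp},\k)=1$ is granted, every non-split one-dimensional central extension of $\psl_{kp}$ is isomorphic as a Lie algebra to $\sl_{kp}$, so no Chevalley-style verification is needed.
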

\begin{proof}
	By construction, $\z(\widetilde{\g})$ lies in $[\widetilde{\g},\widetilde{\g}]$ and coincides with the radical of the restriction of
	$b$ to $[\widetilde{\g},\widetilde{\g}]$. Therefore, $b$ gives rise to a non-degenerate $\g$-invariant symmetric bilinear form on $[\g,\g]\,\cong\, [\widetilde{\g},\widetilde{\g}]/\z(\widetilde{\g})$; we call it $\bar{b}$. Since $\Der [\g,\g]\,\cong \g$ and $\g\cong \widetilde{\g}/\z(\widetilde{\g})$ as Lie algebras, the bilinear form $\bar{b}$ is invariant under all derivations of $[\g,\g]$. It follows that for any $2$-cocycle
	$\varphi$ on $[\g,\g]$ with values in $\k$ there is an element $h\in\g$ such that 
	$\varphi(x,y)=\bar{b}([h,x], y)$ for all $x,y\in[\g,\g]$. Furthermore, the central extension of $[\g,\g]$ associated with $\varphi$ is trivial if and only if $h\in [\g,\g]$.
	
	If $G=G_1$ then $\g$ is perfect, whilst if $G$ has a component of type ${\rm A}_{kp-1}$ then $[\g,\g]$ has codimension $1$ in $\g$. In case $G=G_1$, this shows that all central extentions of $\g=[\g,\g]$ are trivial. Hence $\LL\cong\g$ if 
	$\LL$ is centreless and $\LL\,\cong\,\k\oplus\g\,\cong\, \Lie(T_0\times \widetilde{G}_1)$
	if $\dim \z(\LL)=1$ (we view $\k$ is a $1$-dimensional toral Lie algebra). As $\z(\LL)$ is a torus, this is an isomorphism of restricted Lie algebras.
	
	If $G$ has a component of type ${\rm A}_{kp-1}$ then 
	$[\widetilde{\g},\widetilde{\g}]\,\cong\, \g_1\oplus \sl_{kp}$ is a non-split central extension of $[\g,\g]$. Since the above discussion implies that
	$[\g,\g]$ admits a unique non-trivial central extension (up to equivalence), it follows from Lemma~\ref{L5} that
	$[\LL,\LL]\cong\g_1\oplus \sl_{kp}$ as ordinary Lie algebras. Since Lemma~\ref{L2} entails that each subspace $[\LL_\alpha,\LL_{-\alpha}]$ is spanned by a toral element of $\LL$, this is, in fact, an isomorphism of restricted Lie algebras. 
	
	Let $\{\alpha_1,\ldots\alpha_{kp-1}\}\subseteq \Pi$ be the simple roots of $G_2$ with respect to $T$ numbered as in \cite[Planche~I]{Bour}.
	Since 
	$\g_2\cong\mathfrak{pgl}_{kp}$ there exists $h\in \Lie(T)$ such that $[h,\g_1]=0$,  
	$\alpha_1(h)=1$, and $\alpha_i(h)=0$ for $i>0$ Since $h\not\in[\g,\g]$ we have that $\g=\k h\oplus [\g,\g]$. Since $\z(\LL)$ is a torus, it is immediate from Proposition~\ref{P1} 
	that there exists a toral element $\hat{h}\in \LL$ which maps onto $h$ under the canonical 
	homomorphism $\LL\twoheadrightarrow \bar{\LL}=\g$. Since $\LL=\k\hat{h}\oplus [\LL,\LL]$ and $[\LL,\LL]\cong\g_1\oplus\sl_{kp}$, applying Lemma~\ref{L2} once again we deduce that $\LL\cong\g_1\oplus\gl_{kp}\cong\Lie(\widetilde{G})$ as restricted Lie algebras. This completes the proof.
	\end{proof}
\subsection{}\label{3.6}  Proposition~\ref{P2} enables us to identify the restricted Lie algebras $\LL$ and $\widetilde{\g}$ and regard $V$ as an irreducible restricted $\widetilde{\g}$-module. We may also assume that $T$ is a maximal torus of the reductive group $\widetilde{G}$. Let $\n_+$ and $\n_-$ denote the $\k$-spans of all $e_\alpha$ and all $e_{-\alpha}$ with $\alpha\in\Phi_+$, respectively.
By Lemma~\ref{L3}, there is a rational action of $T$ on $V$ compatible with that of $\widetilde{\g}$. Since $V$ is irreducible, the fixed-point space $V^{\n_+}=\{v\in V\,|\,\, \n_+.\, v=0\}$ has dimension $1$ and is spanned by a highest weight vector $v_\lambda$ for $T$, where $\lambda\in X(T)$. Also, $V=U_0(\n_-)\,.\,v_\lambda$. Since the derived subgroup of $\widetilde{G}$ is simply connected, \cite[Theorem~2]{Jan00} shows that $\lambda\in X(T)$ can be chosen to be dominant and $p$-restricted, that is  $0\le \la\lambda,\alpha^\vee\ra\le p-1$ for all $\alpha\in\Pi$.
We denote by $L(\lambda)$ the irreducible rational $\widetilde{G}$-module with highest weight 
$\lambda$ and write $\rho$ for the corresponding representation of $\widetilde{G}$ in 
$\GL(L(\lambda))$. By the general theory of linear algebraic groups, 
the differential ${\rm d}_e\rho\colon\,\widetilde{\g}\to \gl(L(\lambda))$ is a restricted representation of $\widetilde{\g}$.
\begin{Lemma}\label{L6}
	We have that $V\cong L(\lambda)$ as $\widetilde{\g}$-modules. 
\end{Lemma}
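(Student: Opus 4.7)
The plan is to show that both $V$ and $L(\lambda)$ are irreducible restricted $\widetilde{\g}$-modules carrying the same highest weight $\lambda$, and then invoke the standard uniqueness of simple highest-weight modules over $U_0(\widetilde{\g})$.

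First I would check that $L(\lambda)$, when viewed via ${\rm d}_e\rho$, is a simple restricted $\widetilde{\g}$-module whose highest weight (with respect to $T$ and $\n_+$) is exactly $\lambda$. Because the derived subgroup of $\widetilde{G}$ is simply connected and $\lambda$ is dominant and $p$-restricted, Curtis--Steinberg--Jantzen theory (as cited via \cite[Theorem~2]{Jan00} in the previous paragraph) guarantees that ${\rm d}_e\rho$ makes $L(\lambda)$ into a simple $U_0(\widetilde{\g})$-module; the highest weight space $L(\lambda)^{\n_+}$ is one-dimensional with $T$-weight $\lambda$, and $L(\lambda) = U_0(\n_-)\cdot v_\lambda^\rho$ for a generator $v_\lambda^\rho$.

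Next I would construct a nonzero $\widetilde{\g}$-module homomorphism $V \to L(\lambda)$ (or the other way around). For this, consider the Verma-type restricted module $Z(\lambda) := U_0(\widetilde{\g}) \otimes_{U_0(\b)} \k_\lambda$ built from the Borel $\b = \t \oplus \n_+$ on which $\n_+$ acts trivially and $\t$ by $\lambda$. By the universal property of $Z(\lambda)$, both $V$ and $L(\lambda)$ are quotients of $Z(\lambda)$: the vectors $v_\lambda\in V^{\n_+}$ and $v_\lambda^\rho \in L(\lambda)^{\n_+}$ are $\n_+$-primitive with $\t$-weight $\lambda$, hence each induces a surjective $U_0(\widetilde{\g})$-homomorphism $Z(\lambda) \twoheadrightarrow V$ and $Z(\lambda) \twoheadrightarrow L(\lambda)$. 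Since $Z(\lambda)$ has a unique maximal submodule (the sum of all submodules not meeting the weight-$\lambda$ line), both quotients are isomorphic to the unique simple head of $Z(\lambda)$.

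Finally I would assemble these observations: $V$ and $L(\lambda)$ are two simple quotients of $Z(\lambda)$, hence isomorphic as $U_0(\widetilde{\g})$-modules, which yields the desired isomorphism of $\widetilde{\g}$-modules. The only point requiring any care is that $\widetilde{\g}$ has a one-dimensional central toral summand in the $\GL_{kp}$-case, so the weight $\lambda$ genuinely records the action of the central torus as well; this is already taken into account because we chose $\lambda\in X(T)$ for the full maximal torus $T$ of $\widetilde{G}$, not merely for its intersection with the derived group. No serious obstacle is expected; the argument is entirely routine once the preceding identifications $\LL\cong\widetilde{\g}$ and the compatibility of the $T$-action with the $[p]$-structure (Lemmas~\ref{L3} and \ref{P2}) are in place.
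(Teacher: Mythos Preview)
Your proposal is correct and follows essentially the same approach as the paper: both arguments show that $L(\lambda)$ is simple as a restricted $\widetilde{\g}$-module (since $\lambda$ is $p$-restricted) and then invoke the classification of simple restricted $\widetilde{\g}$-modules by their highest weights. The only difference is cosmetic: the paper cites this classification directly from \cite[Part~II, Prop.~3.10]{Jan03}, whereas you spell it out by realising both $V$ and $L(\lambda)$ as simple quotients of the baby Verma module $Z(\lambda)$ and using that $Z(\lambda)$ has a unique simple head.
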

\begin{proof}
Since $\lambda\in X(T)$ is a $p$-restricted dominant weight, \cite[Part~II, Prop.~9.24(b)]{Jan03} 
yields that
the $\widetilde{\g}$-module $L(\lambda)$ is irreducible. On the other hand, it is well--known that the irreducible restricted $\widetilde{\g}$-modules are determined up to isomorphism by their highest weights; see \cite[Part~II, Prop.~3.10]{Jan03}, for example. This shows that the restricted $\widetilde{\g}$-modules $V$ and $L(\lambda)$ are isomorphic.
\end{proof}
\subsection{} By our discussion in Subsection~\ref{3.5}, the derived subgroup of $\widetilde{G}$ is simply connected and the Lie algebra $\Lie(\widetilde{G})$ admits a non-degenerate 
$(\Ad \widetilde{G})$-invariant symmetric bilinear form.  In view of Lemma~\ref{L6},  in order to finish the proof of Theorem~\ref{main-thm} it remains show that $p$ is a good prime for $\widetilde{G}$. As $p>3$ by our general assumption, we just need to rule out the case where $p=5$ and $\widetilde{G}$ has components of type ${\rm E}_8$.
\begin{Lemma}\label{L7}
	If $\widetilde{G}$ has a component of type ${\rm E}_8$ then $p>5$.
	\end{Lemma}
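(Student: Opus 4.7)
The plan is to derive a contradiction by applying Lemma~\ref{L1} to a suitably chosen nilpotent element of the $E_8$-factor. Assume for contradiction that $p=5$ and that $\widetilde{G}$ has a simple factor of type $E_8$, with corresponding Lie subalgebra $\g_0\subseteq \widetilde{\g}$. By Proposition~\ref{P2} and the direct-product structure used in Subsection~\ref{3.5}, $\widetilde{\g}=\g_0\oplus\g'$ as a direct sum of ideals, with $\g'$ commuting with $\g_0$. Hence for any $x\in\g_0$ one has $[x,\widetilde{\g}]=[x,\g_0]$, so to contradict Lemma~\ref{L1}(1) applied to $\LL=\widetilde{\g}$ it suffices to exhibit a single nilpotent $e\in\g_0$ with $e\notin (\ad e)^2(\g_0)$.

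Next I would invoke the fact that $p=5$ is a bad prime for $E_8$: at this prime the standard $\sl_2$-triple machinery breaks down, and there exist nilpotent orbits in $\Lie(E_8)$ whose representatives do not admit a Jacobson--Morozov-type $\sl_2$-triple. The geometric reformulation is that \cite[Theorem~A(ii)]{P03a}, which guarantees a $\Z$-grading $\g_0=\bigoplus_i\,(\g_0)_i$ with $e\in(\g_0)_2$ and $[e,(\g_0)_0]=(\g_0)_2$, fails for $\gl(V)$-replaced-by-$\g_0$; this is precisely because $\g_0$ at $p=5$ does not admit the non-degenerate $(\Ad\widetilde{G}_0)$-invariant \emph{trace} form that underlies the proof of loc.~cit.\ (the Killing form of $E_8$ is identically zero modulo $5$). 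Choosing $e$ in such an orbit yields $e\notin[e,[e,\g_0]]$, hence $e\notin[e,[e,\widetilde{\g}]]$, contradicting Lemma~\ref{L1}(1).

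An alternative route, which might be more self-contained, is through Garibaldi's theorem: the Richardson decomposition $\gl(V)=\LL\oplus R$ together with the chain $\g_0 \subseteq \widetilde{\g} \subseteq \gl(V)$ forces the trace form $\beta(x,y)=\tr(\rho(x)\rho(y))$ to interact non-trivially with the simple ideal $\g_0$; comparing this with \cite[Theorem~A]{Gar} (which requires $p$ good for $\widetilde{G}_1$ to even admit such a trace-form-generated non-degenerate invariant form) yields the same contradiction.

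The principal obstacle is identifying the offending nilpotent element in $\Lie(E_8)$ at $p=5$ and verifying that $e\notin(\ad e)^2(\Lie(E_8))$; this ultimately rests on the detailed theory of nilpotent orbits in exceptional Lie algebras over fields of bad characteristic, rather than on machinery developed inside the present paper.
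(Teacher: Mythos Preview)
Your proposal has a genuine gap. The crux of your argument is the existence of a nilpotent $e\in\g_0=\Lie(E_8)$ at $p=5$ with $e\notin(\ad e)^2(\g_0)$, but you never establish this, and the reasoning you offer does not support it: the fact that the hypotheses of \cite[Theorem~A(ii)]{P03a} fail for $\g_0$ (because the Killing form vanishes) says nothing about whether its conclusion fails, and in any case that conclusion concerns a grading with $[e,(\g_0)_0]=(\g_0)_2$, which is not the same as $e\in(\ad e)^2(\g_0)$. In fact the most obvious candidate, the regular nilpotent $e=\sum_i e_{\alpha_i}$, \emph{does} satisfy $e\in(\ad e)^2(\g_0)$ at every prime: taking $f=\sum_i c_i e_{-\alpha_i}$, the equation $[e,[e,f]]=e$ reduces to $A\mathbf{c}=-\mathbf{1}$ for the Cartan matrix $A$ of $E_8$, and $\det A=1$. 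You provide no other candidate and explicitly flag this as an unresolved obstacle. Your Garibaldi alternative is too vague to evaluate: even if the trace form of $\gl(V)$ restricts to zero on $\g_0$, you do not explain how this contradicts the Richardson decomposition, which is not assumed to be orthogonal.

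The paper's argument is entirely different and avoids any element-by-element search in bad characteristic. It uses the Richardson splitting $\gl(V)=\h\oplus\widetilde{R}$ (with $\h$ the $E_8$-summand) to show that for a regular nilpotent $e\in\h$ the tangent space $T_e\big((\Ad\rho(H))\cdot e\big)$ lies inside $[\h,e]$; this forces $\dim C_H(e)\ge\dim\h_e$, so the regular adjoint orbit is smooth, and Springer's criterion \cite[Theorem~5.9]{Spr66} then gives that $p$ is good for $H$.
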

\begin{proof}
Let $H$ be a simple component of type ${\rm E}_8$ in $\widetilde{G}$. It is immediate from our description of $\widetilde{G}$ that it contains 
a connected normal subgroup $H'$ such that $\widetilde{G}\cong H\times H'$. Let $\h=\Lie(H)$ and $\h'=\Lie(H')$. The ideals $\h$ and $\h'$ of $\widetilde{\g}$ commute and $\widetilde{\g}=\h\oplus \h'$.  To ease notation we identify $\widetilde{\g}$ with $\LL$. Since $\h$ is a direct summand of  $\widetilde{\g}$, the $\widetilde{\g}$-module $V$ is Richardson for $\h$. 
Let $\widetilde{R}$ be a subspace of $\gl(V)$ such that $[\h,\widetilde{R}]\subseteq \widetilde{R}$ and  $\gl(V)=\h\oplus \widetilde{R}$. 

It is well--known that the nilpotent variety $\N(\h)$ is irreducible and contains a unique open $(\Ad H)$-orbit $\O_{\rm reg}$. Pick $e\in\O_{\rm reg}$ and regard it as a nilpotent  element of $\gl(V)$.
Since the closed subgroup $\rho(H)$ of $\GL(V)$ normalises $\h=({\rm d}_e\rho)(\h)$, the tangent space $T_e(\Ad \rho(H)\,.\,e)$ is contained in $\h$. On the other hand,  since all adjoint  $\GL(V)$-orbits are smooth,
$$T_e(\Ad \rho(H)\,.\,e)\subseteq\,\h\cap T_e((\Ad \GL(V)\,.\,e)=\,\h\cap [\gl(V),e]\,=\,\h\cap\big([\h,e]\oplus [\widetilde{R},e]\big)=\,[\h,e],$$ for $[\h,e]\subseteq \h$ and $\h\cap [\widetilde{R},e]\subseteq \h\cap\widetilde{R}=\{0\}$. Therefore,
$\dim\,(\Ad H)\,.\,e\le \dim\, [\h,e]=\dim\h-\dim \h_e,$ forcing $\dim C_H(e)\ge \dim\h_e$. As $\Lie(C_H(e))\subseteq \h_e$, we deduce the adjoint $H$-orbit of $e\in\O_{\rm reg}$ is smooth. Thanks to a well-known result of Springer, this entails that $p$ is a good prime for $H$; see \cite[Theorem~5.9]{Spr66}. 
\end{proof}
This completes the proof of Lemma~\ref{L7}, and Theorem~\ref{main-thm} follows.
\subsection{} In this subsection we assume that $G$ is a simple, simply connected algebraic $\k$-group and $p$ is a very good prime for $G$.  All standard results on representations of reductive groups used in what follows can be found in \cite{Jan03}.
Given a dominant weight $\lambda\in X(T)$ we denote by
$V(\lambda)$ the Weyl module of highest weight $\lambda$. The quotient 
$L(\lambda):=V(\lambda)/{\rm rad}\,V(\lambda)$ is a simple rational $G$-module of highest weight $\lambda$ and
$\dim V(\lambda)$ can be computed by using the Weyl dimension formula. The Lie algebra $\g=\Lie(G)$ acts on $L(\lambda)$ via the differential at $e\in G$ of the irreducible representation $\rho_\lambda\colon\,G\to GL(L(\lambda))$ and ${\rm d}_e\rho_\lambda\colon\,\g\to\gl(L(\lambda))$  is irreducible if and only if $\lambda$ is $p$-restricted (as defined in Subsection~\ref{3.6}). 
As before, we adopt Bourbaki's numbering of simple roots in $\Pi$ and let $\theta$ be the highest root of $\Phi$ with respect to $\Pi$. Since $\rho$ is already engaged we write $\delta$ for the half-sum of the roots in $\Phi_+$.

Given a finite dimensional representation $\rho\colon\,\g\to  \gl(M)$ we denote by 
${\rm tr}_M(h_\theta)^2$ the trace of the endomorphism $\rho(h_\theta)^2$. 
It is not hard to see for any two finite dimensional restricted $\g$-modules $M$ and $N$ one has
\begin{equation}\label{E1}
{\rm tr}_{M\otimes N}\,(h_\theta)^2\,=\,(\dim\,M)\cdot	{\rm tr}_N(h_\theta)^2+(\dim\,N)\cdot	{\rm tr}_M(h_\theta)^2
\end{equation}
(one should keep in mind that $M$ and $N$ decompose into a direct sum of weight spaces with restpect to $\t$ and $h_\theta\in [\g,\g]$ has zero trace on both $M$ and $N$).

Our goal in this subsection is to find a $p$-restricted dominant $\lambda\in X(T)$ such that $p\nmid\dim L(\lambda)$ and 
${\rm tr}_{L(\lambda)}(h_\theta)^2\ne 0$. In the characteristic zero case similar quantities are often
computed by using the notion of {\it Dynkin index}; see \cite[Ch.~I, \S 2]{Dyn}. Therefore, it will be convenient for us to regard $G$ as the group of $\k$-points of  a simply connected Chevalley group scheme $G_\Z$ with the same root datum as $G$. 
We shall also assume that $T$ is obtained by base-changing a maximal split torus of $T_\Z$ of $G_\Z$, so that $X(T)=X(T_\Z)$. Then $h_\theta=H_\theta\otimes_\Z 1$ for the semisimple root vector $H_\theta={\rm d}_e(\theta^\vee)\in\Lie(T_\Z)$. 

Let $\g_\Z$ be the Lie algebra of $G_\Z$ and denote by $V_\Z(\lambda)$ the Weyl 
module for $G_\Z$ with highest weight $\lambda\in X(T)$. Then $\g=\g_\Z\otimes_\Z \k$ and $V(\lambda)=V_\Z\otimes_\Z\k$. We denote by $X(\lambda)$ the set of all $T_\Z$-weights of $V_\Z(\lambda)$ and write $n_\mu$ for the multiplicity of $\mu\in X(\lambda)$ in $V_\Z(\lambda)$. The Dynkin index $d(\lambda)$ of $V_\Z(\lambda)$ is defined as
$$d(\lambda)\,:=\,\frac{1}{2}\sum_{\mu\in X(\lambda)}n_\mu\, \mu(H_\theta)^2\,=\,\frac{1}{2}\,{\rm tr}_{V_\Z(\lambda)}\,(H_\theta)^2.$$
It is well-known that $d(\lambda)$ is an integer; see \cite[2.3]{LaSo}, for example. For all fundamental dominant weights $\varpi_i$ with $1\le i\le {\rm rk}(G_\Z)$, the integers
$d(\varpi_i)$ are computed in \cite[Table~5]{Dyn}, and the three 
misprints in type ${\rm E}_8$ are corrected in \cite[p.~504]{LaSo}. 

Let $(\,\cdot\,,\,\cdot\,)$ be the scalar product on the $X(T_\Z)\otimes_\Z \mathbb{R}$ invariant under the action of the Weyl group $W(\Phi)$ and such that $(\theta,\theta)=2$. Dynkin proved in
\cite{Dyn} that
\begin{equation}\label{E2}
	d(\lambda)\,=\,\frac{\dim\,V_\mathbb{Q}(\lambda)}{\dim\,\g_\mathbb{Q}}\cdot (\lambda+2\delta,\lambda).
\end{equation}	 
Here $V_\mathbb{Q}(\lambda)=V_\Z(\lambda)\otimes_\Z\mathbb{Q}$ and $\g_\mathbb{Q}=\g_\Z\otimes_\Z\mathbb{Q}$. 
Dynkin's original proof of the integrality of $d(\lambda)$ involved some case-by-case considerations, but a shorter argument was later found in \cite[Ch.~I, \S 3.10]{Oni}. We refer to \cite[\S 1]{Pan} for more detail on the history of this formula.
\begin{Proposition}\label{P3}
If $p$ is a very good prime for $G$, then there exists a $p$-restricted dominant weight $\lambda\in X(T)$ such that $p\nmid\dim\,L(\lambda)$ and 
${\rm tr}_{L(\lambda)}\,(h_\theta)^2\ne 0$.
\end{Proposition}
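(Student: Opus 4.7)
My plan is to exhibit $\lambda$ explicitly via a case-by-case analysis on the simple type of $G$, exploiting the integrality of the Dynkin index to transfer information from characteristic zero into characteristic $p$.

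The central reduction is as follows: if a dominant $p$-restricted weight $\lambda$ has the property that the Weyl module $V(\lambda)$ remains irreducible in characteristic $p$, so that $V(\lambda)=L(\lambda)$, then $L(\lambda)$ inherits a $\Z$-form from $V_\Z(\lambda)$, and reducing modulo $p$ yields
$${\rm tr}_{L(\lambda)}(h_\theta)^2 \,=\, 2d(\lambda)\cdot 1_\k \qquad \text{and}\qquad \dim L(\lambda)\,=\,\dim V_\mathbb{Q}(\lambda)\cdot 1_\k.$$
Hence it suffices to find a $p$-restricted dominant $\lambda$ for which $V(\lambda)$ stays irreducible and both $\dim V_\mathbb{Q}(\lambda)$ and $d(\lambda)$ (computed from formula~(\ref{E2}) or read off from \cite[Table~5]{Dyn}) are coprime to $p$.

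The natural candidates are minuscule fundamental weights, since minuscule Weyl modules are automatically irreducible in every characteristic (each weight space is one-dimensional and Weyl-conjugate to $\lambda$). This disposes of the types $A_n, B_n, C_n, D_n, E_6, E_7$: one takes $\varpi_1$ in types $A_n$, $C_n$, $D_n$, the spin representation $\varpi_n$ in type $B_n$ (or switches to a half-spin representation in type $D_n$ when $p$ divides $n$), and the $27$- or $56$-dimensional minuscule module in types $E_6$ or $E_7$. The resulting dimensions are $n+1$, $2^n$, $2n$, $27$, $56$, and the very-good-prime hypothesis (which in particular rules out $A_{kp-1}$ factors and $p\in\{2,3\}$) forces them all to be coprime to $p$, after switching among the available minuscule weights in a given type if necessary. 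The associated Dynkin indices are small explicit integers readable from \cite[Table~5]{Dyn}, and are likewise coprime to $p$ for $p>3$, with only finitely many awkward pairs $(G,p)$ requiring a switch to a larger fundamental representation.

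The main obstacle is the exceptional types $E_8, F_4, G_2$, which admit no minuscule weight. Here I would begin with the adjoint module, $\lambda=\theta$, noting that $L(\theta)=\g$ in good characteristic and $d(\theta)=2h^\vee$; this choice works whenever $p$ divides neither $\dim\g$ nor $2h^\vee$. Only finitely many exceptional pairs $(G,p)$ survive (for instance $p=31$ for $E_8$, where $\dim\g=248=8\cdot 31$, or $p=7$ for $G_2$, where $\dim\g=14=2\cdot 7$). For each such residual case one switches to the next-smallest fundamental representation whose dimension and Dynkin index, tabulated in \cite{Dyn}, are coprime to $p$, confirming irreducibility of the corresponding Weyl module in characteristic $p$ via the Jantzen sum formula or the known decomposition matrices. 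The hard part of the argument is thus a small, finite verification for the exceptional types.
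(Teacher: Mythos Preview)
Your overall strategy—reduce to a Dynkin-index computation and then run through the types, starting from minuscule or adjoint modules—is exactly what the paper does. However, there are genuine gaps in the finishing steps.

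First, for $C_n$ with $p\mid n$ the only minuscule weight is $\varpi_1$, and $\dim L(\varpi_1)=2n$ is divisible by $p$. This is an \emph{infinite} family, not one of ``finitely many awkward pairs''. The paper switches to $\varpi_2$, but $V(\varpi_2)$ is \emph{not} irreducible (it has a one-dimensional trivial radical), so your ``central reduction'' requiring $V(\lambda)=L(\lambda)$ does not apply as stated.

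Second, and more seriously, for the residual exceptional cases $(G_2,7)$, $(F_4,13)$, $(E_8,31)$ your proposed fallback to ``the next-smallest fundamental representation'' cannot work: for each of these pairs \emph{every} fundamental $L(\varpi_i)$ has dimension divisible by $p$ (for instance, in $G_2$ the two fundamental modules have dimensions $7$ and $14$; the paper notes explicitly that L\"ubeck's tables rule out all fundamental weights here). The paper is forced to use the non-fundamental weights $2\varpi_1$, $2\varpi_4$, $2\varpi_8$ respectively, and in each case $V(\lambda)$ is again reducible with $\operatorname{rad}V(\lambda)\cong L(0)$. The missing ingredient in your plan is the observation that when the radical of $V(\lambda)$ is the trivial module, $h_\theta$ acts by zero on it, so one still has $\operatorname{tr}_{L(\lambda)}(h_\theta)^2=\operatorname{tr}_{V(\lambda)}(h_\theta)^2=2d(\lambda)\bmod p$ while $\dim L(\lambda)=\dim V(\lambda)-1$; one then checks directly that both numbers are prime to $p$. (The same remark handles $(E_7,7)$, where the minuscule $\varpi_7$ has dimension $56=8\cdot 7$ and one must pass to $\varpi_6$ with a reducible Weyl module.)
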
		
	\begin{proof}
		If $G$ is $\SL_n$ or $\Sp_{2n}$ with $p\nmid n$, then the natural $G$-module $L(\varpi_1)$ satisfies all our requirements since $p\nmid \dim L(\varpi_1)$ and
		 ${\rm tr}_{L(\varpi_1)}\,(h_\theta)^2=2$. If $G=\Sp_{2n}$ and $p\mid n$ we can take  
		 $L(\varpi_2)$ instead. Indeed, \cite[Cor.~2]{PSup} shows that in this case   
		 ${\rm rad}\,V(\varpi_2)\cong L(0)$, implying that 
		 $\dim\,L(\varpi_2)={2n\choose 2}-2$. Since $d(\varpi_2)=2n-2$ by \cite[p.~504]{LaSo}, we have that $${\rm tr}_{L(\varpi_2)}\,(h_\theta)^2=\,{\rm tr}_{V(\varpi_2)}\,(h_\theta)^2=\,2d(\varpi_2)\bmod p\in{\mathbb F}_p^\times.$$
		 If $G$ is of type ${\rm B}_n$ or ${\rm D}_n$, then \cite[p.~504]{LaSo} shows that we can take for $\lambda$ the minuscule weight $\varpi_n$ as both $d(\varpi_n)$ and 
		 $\dim L(\varpi_n)=\dim V(\varpi_n)$ are powers of $2$ (and $p>2$).
		 
		 Now suppose $G$ is an exceptional group. Since $p$ is a good prime for $G$, the Killing form of $\g$ is non-degenerate. This is well--known and follows, for example, from 
		 the fact that $d(\theta)=2h^\vee$, where $h^\vee$ is the dual Coxeter number of $\Phi$; see \cite[\S 1]{Gar} for more detail.
		 If $p\nmid \dim\,\g$, we can take for $L(\lambda)$ the adjoint $G$-module $\g\cong L(\theta)$. Indeed, our assumptions on $p$ and $G$ imply that 
		 $\g$ is a simple Lie algebra and $p\nmid h^\vee$.
		 
		 From now on we may assume that $G$ is exceptional and $p\mid\dim\,\g$. If $G$ is of type ${\rm E}_6$
		 we can take for $\lambda$ the minuscule weight $\varpi_1$. In this case $V(\varpi_1)\cong L(\varpi_1)$ has dimension $27$ and $d(\varpi_1)=6$ by 
		 \cite[p.~504]{LaSo}. 
		 If $G$ is of type ${\rm E}_7$ and $p=7$ we can take for $\lambda$ the fundamental weight $\varpi_6$. Then $d(\varpi_6)=648$ by \cite[p.~504]{LaSo}, whilst a quick look at \cite[6.52]{Lue} reveals that ${\rm rad}\,V(\varpi_6)\cong L(0)$ has dimension $1$ and $\dim L(\varpi_6)=1538$. Since both $648$ and $1538$ are invertible 
		 in ${\mathbb F}_7$ and 
		 ${\rm tr}_{L(\varpi_6)}(h_\theta)^2={\rm tr}_{V(\varpi_6)}(h_\theta)^2=2d(\varpi_6)\bmod 7,$ this choice of $\lambda$ satisfies all our requirements. If $G$ is of type ${\rm E}_7$ and $p=19$
		 we take for $\lambda$ the minuscule weight $\varpi_7$. Then $V(\varpi_7)\cong L(\varpi_7)$ has dimension $56$ and $d(\varpi_7)=12$ by 
		 \cite[p.~504]{LaSo}. Since both $56$ and $12$ are invertible in ${\mathbb F}_{19}$ this is a good choice for us.
		 
		 If $G$ is of type ${\rm G}_2$, ${\rm F}_4$ or ${\rm E}_8$, then the good primes dividing $\dim \g$ are $7$, $13$ and $31$, respectively. The tables in \cite{Lue} indicate that we cannot use the fundamental highest weights to construct a suitable $L(\lambda)$.
		 
		 Suppose $G$ is of type ${\rm E}_8$ and $p=31$. From \cite[6.53]{Lue} we get $\dim V_\mathbb{Q}(2\varpi_8)=27000$ and $\dim L(2\varpi_8)=26999$. Hence ${\rm rad}\,V(2\varpi_8)\cong L(0)$, implying that 
		 ${\rm tr}_{L(2\varpi_8)}\,(h_\theta)^2=2d(2\varpi_8)\bmod 31$. 
		 Since all roots in $\Phi$ have the same length we can take for $(\,\cdot\,,\,\cdot\,)$ the scalar product from \cite[Planche~VII]{Bour}. Then
		 $$(2\varpi_8+2\delta, 2\varpi_8)\,=\,4(\varepsilon_7+\varepsilon_8+\delta\mid\varepsilon_7+\varepsilon_8)\,=\,
		 4(\varepsilon_2+2\varepsilon_3+3\varepsilon_4+4\varepsilon_5+5\varepsilon_6+7\varepsilon_7+24\varepsilon_8
		 \mid\varepsilon_7+\varepsilon_8)\,=\,124.$$
		 Using (\ref{E2}) we obtain $2d(2\varpi_8)=\frac{27000}{248}\cdot 248=27000.$ Since both $26999$ and $27000$ are invertible in $\mathbb{F}_{31}$, the module $L(2\varpi_8)$ satisfies all our requirements.
		 
		 Suppose $G$ is of type ${\rm F}_4$ and $p=13$. By \cite[Planche~VIII]{Bour}, we can again choose for $(\,\cdot\,,\,\cdot\,)$ the scalar product $(\,\cdot\mid\cdot\,)$. Setting $\lambda:=2\varpi_4$ we get	$$(\lambda+2\delta,\lambda)\,=\,
		 (2\varepsilon_1+11\varepsilon_1+5\varepsilon_2+3\varepsilon_3+\varepsilon_4\mid 2\varepsilon_1)=26.$$ By \cite[6.50]{Lue}, we have that 
		 $\dim V_\mathbb{Q}(\lambda)=324$ and $\dim L(\lambda)=323$, which implies that ${\rm rad}\,V(\lambda)\cong L(0)$ and ${\rm tr}_{L(\lambda)}\,(h_\theta)^2=2d(\lambda)\bmod 13$. 
		 Since $2d(\lambda)=\frac{324}{52}\cdot 52=324$ by (\ref{E2}) and both $323$ and $324$ are invertible in $\mathbb{F}_{13}$, this choice of $\lambda$ is good for us.
		 
		 Suppose $G$ is of type ${\rm G}_2$ and $p=7$. By \cite[Planche~IX]{Bour}, we can choose for $(\,\cdot\,,\,\cdot\,)$ the scalar product $\frac{1}{3}(\,\cdot\mid\cdot\,)$.
		 Set $\lambda:=2\varpi_1$. Keeping in mind the numbering of simple roots in  \cite[6.49]{Lue} we get $\dim V(\lambda)=27$ and $\dim L(\lambda)=26$.  So ${\rm rad}\,V(\lambda)\cong L(0)$, forcing ${\rm tr}_{L(\lambda)}\,(h_\theta)^2=2d(\lambda)\bmod 7$. Note that $\lambda=2\theta_0$, where  $\theta_0$ is the highest short root of $\Phi_+$, and $\delta=\theta_0+\theta$. Then 
		 $$(\lambda+2\delta,\lambda)\,=\,\frac{4}{3}(\theta_0+\delta\mid\theta_0)\,=\,
		 \frac{4}{3}(2\theta_0+\theta\mid\theta_0)\,=\,\frac{4}{3}(4+3)\,=\,\frac{28}{3}.$$
		 Then (\ref{E2}) gives  $2d(\lambda)=2\cdot\frac{27}{14}\cdot \frac{28}{3}=36$. As both $26$ and $36$ are invertible in $\mathbb{F}_{7}$, this choice of $\lambda$ is good for us. The proof of the proposition is now complete.	
	\end{proof}
	\begin{Corollary}\label{C2}
	Suppose $G$ is a standard reductive $\k$-group such that $p$ is a very good prime for $G$ and $\dim Z(G)\le 1$. Then there exists an infinitesimally irreducible rational representation $\rho\colon\, G\to\GL(V)$ with $p\nmid \dim V$ such that ${\rm d}_e\rho$ is a 
	faithful irreducible representation of $\g=\Lie(G)$ and the trace form 
	$(X,Y)\mapsto {\rm tr}\big(({\rm d}_e\rho)(X)\circ({\rm d}_e\rho)(Y)\big)$ on $\g$ is non-degenerate.
	\end{Corollary}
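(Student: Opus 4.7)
My plan is to build $V$ as an external tensor product of irreducible representations of the simple factors of $G$ furnished by Proposition~\ref{P3}, twisted suitably by a character of the central torus, and then to verify the three required properties directly.

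First I would decompose $G$. Because $G$ is standard, $G^{\rm der}$ is simply connected semisimple, so $G^{\rm der}\cong G_1\times\cdots\times G_r$ with each $G_i$ simple and simply connected. Since $p$ is very good for $G$ it is very good for each $G_i$ (no $G_i$ has type ${\rm A}_{kp-1}$), so each $\g_i:=\Lie(G_i)$ is simple and centreless, and $\g=\Lie(T_0)\oplus\g_1\oplus\cdots\oplus\g_r$, where $T_0:=Z(G)^\circ$ has dimension $0$ or $1$ by hypothesis.

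Next, for each $i$ apply Proposition~\ref{P3} to obtain a nonzero $p$-restricted dominant weight $\lambda_i$ for $G_i$ with $p\nmid\dim L(\lambda_i)$ and ${\rm tr}_{L(\lambda_i)}(h_{\theta_i})^2\ne 0$, where $\theta_i$ is the highest root of $G_i$. If $\dim T_0=1$, I would then choose a character $\mu$ of $T_0$ with nonzero differential such that $(\mu,\lambda_1,\ldots,\lambda_r)$ descends to a character $\lambda\in X(T)$ of a maximal torus $T$ of $G$; since $T_0\cap G^{\rm der}$ is finite, the admissible $\mu$ form an infinite coset in $X(T_0)\cong\Z$, and shifting $\mu$ within this coset does not affect the pairings with simple coroots, so $p$-restrictedness of $\lambda$ is preserved. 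Set $V:=L(\lambda)$. By \cite[Part~II, Prop.~9.24(b)]{Jan03} (as already used in the proof of Lemma~\ref{L6}), the differential ${\rm d}_e\rho$ is an irreducible representation of $\g$, and $V$ is isomorphic as a $T_0\times G^{\rm der}$-module to the external tensor product $L(\lambda_1)\otimes\cdots\otimes L(\lambda_r)$ with $T_0$ acting by $\mu$; in particular $\dim V=\prod_i\dim L(\lambda_i)$ is coprime to $p$.

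Finally I would verify faithfulness and non-degeneracy of the trace form $B(X,Y):={\rm tr}\big(({\rm d}_e\rho)(X)\circ({\rm d}_e\rho)(Y)\big)$. Faithfulness is immediate from the block decomposition of $\g$: each simple $\g_i$ acts nontrivially (since $\lambda_i\ne 0$), hence injectively, and $\Lie(T_0)$ acts as the nonzero scalar character $\mu$. For $B$, elements of $\g_i$ act on $V$ as $\rho_i(\,\cdot\,)$ in the $i$-th tensor slot and trivially in the others; since $\g_i=[\g_i,\g_i]$ is perfect, ${\rm tr}_V\circ ({\rm d}_e\rho)|_{\g_i}=0$, which kills all cross-blocks $B(\g_i,\g_j)$ with $i\ne j$ as well as $B(\Lie(T_0),\g_i)$. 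Thus $B$ is block-diagonal on $\g=\Lie(T_0)\oplus\bigoplus_i\g_i$. On $\Lie(T_0)$ the form equals $(\dim V)\,\mu(z)\mu(z')$ and is non-degenerate. On each $\g_i$, iterated use of~(\ref{E1}) (noting that $h_{\theta_i}$ acts as zero on $L(\lambda_j)$ for $j\ne i$) yields
\[
B(h_{\theta_i},h_{\theta_i})\,=\,\Big(\prod_{j\ne i}\dim L(\lambda_j)\Big)\cdot {\rm tr}_{L(\lambda_i)}(h_{\theta_i})^2\,\ne\,0,
\]
and since any two $(\Ad G_i)$-invariant symmetric bilinear forms on the simple Lie algebra $\g_i$ are proportional in good characteristic, non-vanishing at a single pair forces $B|_{\g_i}$ to be non-degenerate. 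The main bookkeeping subtlety is the choice of $\mu$ that is both nonzero and makes the combined weight a character of $T$ (rather than only of $T_0\times T^{\rm der}$); once this is arranged, the remaining verifications are an assembly of Proposition~\ref{P3} and formula~(\ref{E1}).
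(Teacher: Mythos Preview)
Your proof is correct and follows essentially the same approach as the paper: decompose $G$ into its centre and simple factors, invoke Proposition~\ref{P3} for each factor, form the tensor product, let the centre act by a nontrivial character, and use~(\ref{E1}) together with simplicity of each $\g_i$ to deduce non-degeneracy of the trace form. You are in fact more careful than the paper on one point---ensuring that the chosen central character $\mu$ is compatible with the $\D G$-action so that the tensor product genuinely descends to a representation of $G$ rather than only of $T_0\times G^{\rm der}$---whereas the paper simply asserts that $Z(G)$ can be made to act by scalars with nonzero differential.
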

\begin{proof}
	Since the derived subgroup $\D G$ of $G$ is simply connected we have that $G=Z(G)\cdot\D G$ and  
	$\D G\cong  G_1\times\cdots\times G_s$ where $G_1, \ldots, G_s$ are the simple components of $G$.
	For each $i\le s$ we choose a $p$-restricted dominant weight $\lambda_i$ of $G_i$ 
	satisfying the requirements of Proposition~\ref{P3} and consider the irreducible $\D G$-module
	$V:=\,L(\lambda_1)\otimes\cdots\otimes L(\lambda_s)$. We allow $Z(G)$ to act on $V$ by scalar operators in such a way that the differential of the action is nonzero, and denote by $\rho$ the infinitesimally irreducible representation of $G$ obtained this way. The Lie algebra 
	$\g=\Lie(Z(G))\oplus \sum_{i=1}^s\,\Lie(G_i)$ then acts faithfully  
	on $V$. Since $p\nmid \dim L(\lambda_i)$ for all $i$, it is immediate from (\ref{E1}) that
	the restriction of the symmetric bilinear form $(X,Y)\mapsto {\rm tr}\big(({\rm d}_e\rho)(X)\circ ({\rm d}_e\rho)(Y)\big)$ on $\g$ to each $\Lie(G_i)$ and to $\Lie(Z(G))$is nonzero. Since each ideal $\Lie(G_i)$ is a simple Lie algebra, the result follows. 
	\end{proof}
\begin{Remark}
Let $R$ to be the orthogonal complement of $({\rm d}_e\rho)(\g)$ with respect to the non-degenerate trace form associated with $V$. Then $\gl(V)=({\rm d}_e\rho)(\g)\oplus R$ and $[({\rm d}_e\rho)(\g)\, R]\subseteq R$. Since ${\rm d}_e\rho$ is faithful, we see that $V$ is a Richardson module for $\g$. Furthermore, the complement $R$ is invariant under the conjugation action of $\rho(G)$ on $\gl(V)$. 
\end{Remark}
\begin{Remark}
Suppose $G$ is as in Corollary~\ref{C2} (with $s\ge 1$) and consider 
$\widetilde{G}=\D G\times GL_{kp}$ where $k\ge 1$.
It is well--known (and straightforward to see) that there exist $u,v\in\N_p(\gl_{kp})$ such that
$[u,v]=1_{kp}$. Let $\mathfrak{a}$ denote the $3$-dimensional restricted Lie subalgebra of $\gl_{kp}$ spanned by $u$, $v$ and $1_{kp}$. Representation theory of nilpotent Lie algebras
shows that all faithful irreducible $\mathfrak{a}$-modules have the same dimension equal to $p$.
Consequently, any faithful irreducible  $\gl_{kp}$-module has dimension divisible by $p$.  
Using (\ref{E1}) one observes that
 in contrast with Corollary~\ref{C2} the trace forms associated with the irreducible faithful representations of $\Lie(\widetilde{G})$ are always degenerate. 
 \end{Remark}
\begin{Remark} 
	Suppose $\g=\sl_2$ and $p=3$. As mentioned at the end of Subsection~\ref{S0} any Weyl module $V(m)$ with $m\in\Z_{\ge 0}$  is a Richardson module for $\g$. It follows from (\ref{E2}) that $d(m)=\frac{m+1}{3}\cdot\frac{1}{2} m(m+2)={m+2\choose 3}$. Therefore, if $m(m+1)(m+2)$ is divisible by $9$, then the trace form of $\gl(V(m))$ vanishes on $({\rm d}_e\rho_m)(\g)$. This example shows that there exist indecomposable Richardson modules $V$ for a restricted Lie algebra $\LL$ such that the trace form of $\gl(V)$ vanishes on $\LL\subset\gl(V)$.
	\end{Remark}

\bibliographystyle{amsalpha}

\end{document}